\documentclass[11
pt]{amsart}
\usepackage{amssymb,amsmath,epsfig,mathrsfs, enumerate, xparse, mathtools}
\usepackage[pagewise]{lineno}
\usepackage[nodisplayskipstretch]{setspace}
\setstretch{1.5}
\usepackage{graphicx}\usepackage[normalem]{ulem}
\usepackage{fancyhdr}
\pagestyle{fancy}
\fancyhead[RO,LE]{\small\thepage}
\fancyhead[LO]{\small \emph{\nouppercase{\rightmark}}}
\fancyhead[RE]{\small \emph{\nouppercase{\rightmark}}}
\fancyfoot[L,R,C]{}

\usepackage[margin=2.5cm]{geometry}
\usepackage{hyperref}
\hypersetup{
colorlinks   = true,
urlcolor     = blue,
linkcolor    = blue,
citecolor   = red ,
bookmarksopen=true
}

%
\newtheorem{theorem}{Theorem}[section]
\theoremstyle{plain}

\newtheorem{corollary}[theorem]{Corollary}

\newtheorem{definition}[theorem]{Definition}

\newtheorem{lemma}[theorem]{Lemma}

\newtheorem{proposition}[theorem]{Proposition}
\newtheorem{remark}[theorem]{Remark}

\numberwithin{equation}{section}
\newcommand{\R}{\mathbb{R}}
\newcommand{\pa}{\partial}

\newcommand{\Om}{\Omega}

\newcommand{\Hn}{\mathbb{H}^{n}}
\newcommand{\lm}{\lambda}

\newcommand{\g}{\mathfrak{g}}
\newcommand{\Tau}{\Gamma}
\newcommand{\G}{\mathbb G}
\newcommand{\Ls}{\mathscr{L}}
\newcommand{\ve}{\varepsilon}

\newcommand{\vf}{\varphi}

\newcommand{\V}{\mathscr V}
\newcommand{\s}{\sigma}
\newcommand{\U}{\mathscr U}

\newcommand{\Scr}{\mathscr{S}}

\setlength{\topmargin}{-0.1in}
\setlength{\oddsidemargin}{0.3in}
\setlength{\evensidemargin}{0.3in}
\setlength{\textwidth}{6.5in}
\setlength{\rightmargin}{0.5in}
\setlength{\leftmargin}{-0.5in}
\setlength{\textheight}{9.1
in}

\title[Schauder Estimates]{Higher order  Boundary Schauder Estimates in Carnot Groups}

\author{Agnid Banerjee}
\address{Arizona State University \\ Tempe, USA (former address: Tata Institute for Fundamental Research CAM, Bangalore-560065, INDIA)}
\email[Agnid Banerjee]{agnid.banerjee@asu.edu}

\author{Nicola Garofalo}
\address{Dipartimento di Ingegneria Civile, Edile e Ambientale (DICEA) \\ Universit\`a di Padova\\ 35131 Padova, ITALY}
\email[Nicola Garofalo]{nicola.garofalo@unipd.it}

\thanks{A.B was supported in part by  Department of Atomic Energy,  Government of India, under
project no.  12-R \& D-TFR-5.01-0520. N.G. was supported in part by a Progetto SID: ``Non-local Sobolev and isoperimetric inequalities", Univ. of Padova, 2019, and by a BIRD grant: ``Aspects of nonlocal operators via fine properties of heat kernels", Univ. of Padova, 2022. He has also been partially supported by a Visiting Professorship at the Arizona State University.}

\author{Isidro H. Munive}
\address{Universidad de Guadalajara}\email[Isidro H. Munive]{isidro.munive@academicos.udg.mx}

\begin{document}

\begin{abstract}
In his seminal 1981 study D. Jerison showed the remarkable negative phenomenon that there exist, in general, no Schauder estimates near the characteristic boundary in the Heisenberg group $\Hn$. On the positive side, by adapting tools from Fourier and microlocal analysis, he developed a Schauder theory at a non-characteristic portion of the boundary, based on the non-isotropic Folland-Stein H\"older classes. On the other hand, the 1976 celebrated work of Rothschild and Stein on their lifting theorem established the central position of stratified nilpotent Lie groups (nowadays known as Carnot groups) in the analysis of H\"ormander operators but, to present date, there exists no known counterpart of Jerison's results in these sub-Riemannian ambients. In this paper we fill this gap. We prove optimal $\Tau^{k,\alpha}$ ($k\geq 2$)  Schauder estimates near a $C^{k,\alpha}$ non-characteristic portion of the boundary for $\Tau^{k-2, \alpha}$ perturbations of horizontal Laplacians in Carnot groups. 
\end{abstract}

\maketitle

\tableofcontents

\section{Introduction and statement of the main result}
The fundamental role of Schauder estimates (both interior and at the boundary) in the theory of elliptic and parabolic partial differential equations is well-known. In this paper we are interested in $\Gamma^{k,\alpha}$ Schauder estimates at the boundary in the Dirichlet problem for a class of second order partial differential equations in stratified, nilpotent Lie groups, nowadays known as Carnot groups. The central position of such Lie groups in the analysis of the hypoelliptic operators introduced by H\"ormander in his famous paper \cite{H} was established in the 1976 work of Rothschild and Stein on the so-called \emph{lifting theorem}, see \cite{RS}. One should also see Stein's visionary address at the 1970 ICM in Nice \cite{Snice}, and also E. Cartan's seminal address at the 1928 ICM in Bologna \cite{Ca}. 

To provide the reader with some historical background on our main result we mention that in his 1981 works \cite{Je1, Je2} D. Jerison first analysed the question of Schauder estimates at the boundary for the horizontal Laplacian in the Heisenberg group $\Hn$ (see also \cite{Je3} for a further extension to CR manifolds)\footnote{Long known to physicists as the Weyl's group, $\Hn$ is an important model of a (non-Abelian) Carnot group of step $r=2$, see Definition \ref{D:CG} below for the general notion.}. Jerison divided his analysis into two parts, according to whether or not the relevant portion of the boundary contains so-called \emph{characteristic points}, a notion that goes back to the pioneering works of Fichera \cite{Fi1, Fi2} (see Definition \ref{D:char} below). At such points the vector fields that form the relevant differential operator become tangent to the boundary  and thus one should expect a sudden loss of differentiability,  somewhat akin to what happens in the classical setting with oblique derivative problems. Remarkably, Jerison proved that there exist no Schauder boundary estimates at characteristic points! He did so by constructing a domain in $\Hn$ with real-analytic boundary that support solutions of the horizontal Laplacian $\Delta_H u = 0$ vanishing near a characteristic boundary point, and which near such point possess no better regularity than H\"older. For a detailed analysis of these aspects in connection with the subelliptic Dirichlet problem we refer the reader to the papers \cite{LU, CG, GV, CGNajm, CGN}. 

On the positive side, Jerison proved in \cite{Je1} that at a non-characteristic portion of the boundary it is possible to develop a Schauder theory based on the non-isotropic Folland-Stein H\"older classes $\Gamma^{k,\alpha}$. He achieved this by adapting to $\Hn$ tools from Fourier and microlocal analysis. However, if we leave the Heisenberg group $\Hn$ and we move to a general Carnot group $\G$, then there exists no known counterpart of the results from \cite{Je1}, with the exception of the following partial ones. In \cite{BGM} we derived $\Gamma^{1, \alpha}$ boundary Schauder estimates near a $C^{1, \alpha}$ portion of the non-characteristic boundary and for variable coefficient operators such as \eqref{dp0} below. Our method employed a delicate adaptation of Caffarelli's compactness arguments.
In \cite{BCC},  via an adaptation of the classical method of the Levi parametrix, the authors obtained $\Gamma^{2, \alpha}$ boundary Schauder estimates  near a $C^\infty$ portion of the non-characteristic boundary, and for constant coefficient operators, under the additional geometric hypothesis that, roughly speaking, the boundary has a H\"ormander structure (see assumption (1.5) in \cite{BCC}). As it was noted in the subsequent related work \cite{CGS}, where the authors obtained an inversion result for a double layer potential at non-characteristic points in $\mathbb H^1$, even in the model Heisenberg group, their geometric hypothesis is not generically verified.

In the present work we develop a complete theory of the higher-order  boundary Schauder estimates for general Carnot groups.  Before we state our main result we introduce some notations, referring the reader to Sections \ref{S:prel} and \ref{S:back}
 for a detailed account.
For $k\in \mathbb N\cup\{0\}$ and $0<\alpha< 1$ we indicate with $\Gamma^{k,\alpha}$ the Folland-Stein non-isotropic H\"older classes, see Definition \ref{hf} and Definition \ref{higherholder} below. If $\Om\subset \G$ is a bounded open set, then given a point $p\in \pa \Om$, for any $s>0$ we set for simplicity
\begin{equation}\label{VS}
\V_s(p) = \Om\cap B(p,s),\ \ \ \ \ \ \mathscr S_s(p) = \pa\Om\cap B(p,s),
\end{equation}
where $B(p,s)$ is the open ball in the pseudodistance  \eqref{pseudo} below. When $p=e$, the group identity, we simply write $\V_s$ and $\mathscr S_s$. We suppose that $\mathbb{A}= [a_{ij}]$ be a given $m\times m$ symmetric matrix-valued function with real coefficients and satisfying the following ellipticity condition for some $\lambda > 0$, 
\begin{equation}\label{ea0}
 \lambda \mathbb{I}_m \leq \mathbb{A}(p) \leq \lambda^{-1} \mathbb{I}_m,\ \ \ \ \ \ \ \ \ p\in \G,
 \end{equation}
where $\mathbb{I}_m$ denotes the $m \times m$ identity matrix. The hypothesis \eqref{ea0} will be assumed throughout the paper, without further reference. Furthermore, for a given $k\in \mathbb N$ and a multi-index $I$ of order $|I|=k$,  the $k$-th order horizontal derivative $X^I f$  is defined  as in \eqref{hder} below. Our main result is as follows. 

\begin{theorem}\label{main}
Suppose that $u\in \Tau^2(\V_s(p_0)) \cap C(\overline{\V_s}(p_0))$ be a weak solution to 
\begin{equation}\label{dp0}
\sum_{i,j=1}^m  a_{ij}(p) X_iX_j u = f\ \ \ \text{in}\ \V_s(p_0),\ \ \ \ \ u  = \phi\ \ \ \text{on}\ \Scr_s(p_0),
\end{equation}
for given $p_0 \in \pa \Om$ and $s>0$. Assume that $\mathscr S_s(p_0)$ be non-characteristic and of class $C^{k,\alpha}$, for some $k \in \mathbb N$ such that $k\geq 2$ and $\alpha \in (0,1)$, and that 
\begin{equation}\label{assump1}
a_{ij} \in \Tau^{k-2, \alpha}(\overline{\V_s}(p_0)),\ \ f \in \Tau^{k-2,\alpha}(\overline{\V_s}(p_0)),\  \ \phi \in \Tau^{k,\alpha}(\overline{\V_s}(p_0)).
\end{equation} 
Then, $u\in \Tau^{k,\alpha}(\overline{\V_{s/2}}(p_0))$, and we have the following a priori estimate for the $k$-th order H\"older seminorm of $u$
\begin{align}\label{ap}
&\underset{|I| =k}{\sup} [X^I u]_{\Tau^{k,\alpha}(\V_{s/2}(p_0))} \\& \leq \frac{C}{s^{k+\alpha}} \bigg[||u||_{L^{\infty}(\V_s(p_0))} + \sum_{|I| \leq k-2} s^{2+|I|} ||X^I f||_{L^{\infty}(\V_s(p_0))} \notag \\ &+ \sum_{|I|=k-2} s^{k+\alpha} [X^If]_{\Tau^{0,\alpha}(\V_s(p_0))}+\sum_{|I| \leq k} s^{|I|} ||X^I \phi||_{L^{\infty}(\V_s(p_0))} + s^{k+\alpha} \sum_{|I|=k}  [X^I \phi]_{\Tau^{2,\alpha}(\V_s(p_0))} \bigg],\notag
\end{align}
where $C = C(\G, \lambda, \Om, k, \alpha, [a_{ij}]_{\Tau^{k-2,\alpha}(\V_s(p_0))})>0$.
\end{theorem}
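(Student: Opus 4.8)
The plan is to prove Theorem \ref{main} by a two-step bootstrapping argument built on a base case and an induction on $k$, combined with a flattening of the non-characteristic boundary and a Schauder-type a priori estimate obtained via Campanato/blow-up compactness. First I would record the base case $k=2$: here one must show $\Tau^{2,\alpha}$ regularity up to a $C^{2,\alpha}$ non-characteristic portion of $\pa\Om$ for $\Tau^{0,\alpha}$ perturbations of horizontal Laplacians, with the stated scale-invariant estimate. Since the portion $\Scr_s(p_0)$ is non-characteristic, near $p_0$ exactly one horizontal vector field is transverse to $\pa\Om$; after a left translation sending $p_0$ to $e$ and a rotation of the first layer, I would straighten the boundary by a $\Tau^{k,\alpha}$ change of variables adapted to the group structure (the kind of non-characteristic flattening used in \cite{BGM}), so that $X_1$ becomes the transversal direction and the remaining $X_2,\dots,X_m$ together with the commutators span the tangential directions. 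The key analytic input for the base case is the freezing-coefficient method: freeze $[a_{ij}]$ at $p_0$, compare $u$ with the solution of the constant-coefficient Dirichlet problem for $\sum a_{ij}(p_0)X_iX_j$ with the same boundary data, and use the constant-coefficient boundary estimates — here Jerison's $\Hn$ results have no direct analogue, so I expect the authors to invoke a Poisson-kernel/lifting argument or the compactness scheme from \cite{BGM} pushed to second order — to get a Campanato-type decay $\sup_{B_r}\!\Xint-|X^Iu - (X^Iu)_{B_r}|^2 \lesssim (r/s)^{2\alpha}$ for $|I|=2$, which by the Campanato characterization of $\Tau^{2,\alpha}$ on homogeneous spaces yields the claim.

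Next I would set up the induction: assuming the theorem holds for $k-1$ (so $u\in\Tau^{k-1,\alpha}$ up to the flattened boundary with the corresponding estimate), I differentiate the equation \eqref{dp0}. Applying a horizontal derivative $X_\ell$ for $\ell=2,\dots,m$ — the \emph{tangential} directions after flattening — gives
\begin{equation*}
\sum_{i,j}a_{ij}X_iX_j(X_\ell u) = X_\ell f - \sum_{i,j}(X_\ell a_{ij})X_iX_j u + \big(\text{commutator terms}\big),
\end{equation*}
where the commutator terms involve $[X_\ell,X_i]$, hence lower-order horizontal derivatives of $u$ paired with structure constants of $\G$, and by the inductive hypothesis the whole right-hand side lies in $\Tau^{k-3,\alpha}$ (one uses that $a_{ij}\in\Tau^{k-2,\alpha}$, $f\in\Tau^{k-2,\alpha}$, and $X^Iu\in\Tau^{k-1-|I|,\alpha}$ for $|I|\le k-1$). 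The tangential boundary data $X_\ell\phi$ still lies in $\Tau^{k-1,\alpha}$ because $X_\ell$ is tangential. Hence $X_\ell u$ solves a problem to which the $(k-1)$-case applies, giving $X_\ell u\in\Tau^{k-1,\alpha}$, i.e. control of all $X^Iu$ with $|I|\le k$ that contain at least one tangential factor. The remaining case is the purely transversal derivative $X_1^k u$; this is where the equation itself must be used: since $X_1$ is non-characteristic one can solve the PDE algebraically for $X_1^2 u$ (the coefficient $a_{11}$ is bounded below by ellipticity \eqref{ea0}), writing
\begin{equation*}
X_1^2 u = a_{11}^{-1}\Big(f - \sum_{(i,j)\neq(1,1)}a_{ij}X_iX_j u\Big),
\end{equation*}
and then every further needed derivative $X_1^{k}u = X_1^{k-2}(X_1^2 u)$ is obtained by applying $X_1^{k-2}$ to a right-hand side that, by the previous step and the product/commutator rules in $\Tau^{\cdot,\alpha}$, belongs to $\Tau^{\alpha}$. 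Chaining these estimates and tracking the powers of $s$ through the scaling $p\mapsto \delta_s p$ of the Carnot dilations produces the scale-invariant a priori bound \eqref{ap}.

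The main obstacle I anticipate is the base case $k=2$ at the \emph{characteristic-free} boundary: unlike in $\Hn$, a general Carnot group $\G$ has no explicit Poisson kernel or Fourier/microlocal machinery, so establishing the constant-coefficient boundary Schauder estimate that drives the freezing argument requires building those estimates from scratch — presumably via a blow-up/compactness (Caffarelli-type) scheme in which one must rule out, using Liouville theorems for $\sum a_{ij}(p_0)X_iX_j$ on the half-space $\{X_1>0\}$ with vanishing data, the existence of entire solutions of polynomial growth that are not polynomials of the right homogeneous degree. A second delicate point is that the flattening map must be chosen so that it preserves membership in the Folland–Stein classes $\Tau^{k,\alpha}$ and so that, after flattening, the transversal field $X_1$ and the tangential fields still generate the full Lie algebra with controlled structure constants — the interplay between the Euclidean smoothness $C^{k,\alpha}$ of $\pa\Om$ and the non-isotropic $\Tau^{k,\alpha}$ scale is the subtle bookkeeping that the rest of the paper (Sections \ref{S:prel}, \ref{S:back}) presumably develops, and which one must invoke carefully to keep all constants of the form $C(\G,\lambda,\Om,k,\alpha,[a_{ij}]_{\Tau^{k-2,\alpha}})$.
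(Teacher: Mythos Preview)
Your scheme has a genuine gap at the induction step. When you apply $X_\ell$ to the equation, the commutator contribution is $\sum a_{ij}\big([X_\ell,X_i]X_j+X_i[X_\ell,X_j]\big)u$. In a Carnot group of step $\ge 2$ the bracket $[X_\ell,X_i]$ is a second-layer vector field, which in the Folland--Stein weighting counts as \emph{two} horizontal derivatives, not one; it is not ``lower order.'' Hence $[X_\ell,X_i]X_j u$ is a weight-$3$ operator on $u$, and from the inductive hypothesis $u\in\Tau^{k-1,\alpha}$ you only obtain that this term lies in $\Tau^{k-4,\alpha}$---one full order short of the $\Tau^{k-3,\alpha}$ you need on the right-hand side to feed $X_\ell u$ back into the $(k-1)$-case. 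The induction does not close; this loss from noncommutativity is precisely why the Euclidean differentiate-and-bootstrap argument fails for H\"ormander operators. Your flattening step is also more problematic than you acknowledge: a diffeomorphism straightening $\pa\Om$ generally destroys the left-invariance of the $X_i$, so the transformed operator is no longer of the form $\sum b_{ij}X_iX_j$ with the \emph{same} group vector fields, and the whole $\Tau^{k,\alpha}$ calculus has to be redone for the new fields.

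The paper's route is entirely different and explicitly coordinate-free---no flattening, no induction on $k$. For fixed $k$ it approximates $u$ near a boundary point by functions of the form $dP$, where $d$ is the signed Riemannian distance to $\pa\Om$ and $P$ is a stratified polynomial of degree $\le k-1$ satisfying the ``approximating'' condition $X^I(\Ls(dP))(e)=X^If(e)$ for all $|I|\le k-2$. A compactness lemma (Lemma~\ref{MCL}) shows that an approximation $\|u-dP\|_{L^\infty(\Om\cap B(r))}\le r^{k+\alpha}$ improves to scale $\rho r$ with a new approximating $\widetilde P$; the blow-up limit is $\Delta_H v_0=0$ on a half-space, handled by Kohn--Nirenberg (Theorem~\ref{KN}). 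The crucial new ingredient is that $dP$ is not in the kernel of the frozen operator, so the Taylor polynomial $Q$ of $v_0$ must be corrected to an approximating $\widetilde Q$: this is done via an explicit \textbf{Algorithm} that solves the linear system \eqref{alg}, whose triangular structure in the $\beta_{1,m}$-coordinate of the multi-indices guarantees solvability with $O(\delta)$-small correction. Iteration yields the $k$-th order boundary Taylor polynomial (Step~1); Steps~2 and~3 then combine this pointwise decay with the interior estimates of Corollary~\ref{intreg1} to get $\Tau^{0,\alpha}$ control of $X^I u$, $|I|=k$, up to the boundary.
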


\begin{remark}
Concerning Theorem \ref{main}, we emphasise that we have considered boundary value problems of the type \eqref{dp0} solely for the simplicity of exposition. With standard modifications, our method can be adapted  to infer Schauder estimates for more general equations of the type
\[
\sum_{i,j=1}^m a_{ij} X_i X_j u + \sum_{i=1}^m b_i X_i u + c u= f,
\]
where $b_i$'s and $c$ are in $\Gamma^{k-2, \alpha}(\overline{\V_s}(p_0))$.
\end{remark}

Our approach to Theorem \ref{main} is inspired to the groundbreaking 1989 paper of Caffarelli \cite{Ca}. In this connection, we mention that the homogeneous  structure of a Carnot group, based on the presence of a family of non-isotropic dilations, plays a critical role. We have also been influenced by De Silva and Savin's beautiful results on the higher-order boundary Harnack inequality and Schauder theory for slit domains, in connection with thin obstacle problems, see \cite{DS1, DS2}. Having said this, the reader will see that, in the present sub-Riemannian framework, our approach, which completely avoids flattening of the boundary, has entailed some very delicate adaptation of the Euclidean ideas in the above mentioned works. 

We next provide the reader with some perspective on Theorem \ref{main}. If $u$ is a solution to \eqref{dp0} vanishing on the boundary (after subtracting off the boundary datum), then via compactness arguments we deduce that, at a non-characteristic boundary point $p_0$, $u$ can be approximated up to order $"k+\alpha"$ by  $\Gamma^{k, \alpha}$ functions of the type $d P$, where $d$ is the signed Riemannian distance function from the boundary, and $P$ is an appropriate stratified polynomial of homogeneous degree at most $k-1$. This step crucially utilises, for the limiting ``constant coefficient problem", the non-characteristic boundary regularity of Kohn-Nirenberg in \cite{KN}. Finally, in our analysis the $k-$th order  Taylor polynomial of $dP$ at $p_0 \in \partial \Om$ takes the role of the $k-$th order Taylor polynomial of $u$ at $p_0$. We note here that, for the case $k=1$ treated in \cite{BGM}, our approach was instead to  approximate  the solution $u$ with affine functions that came from the limiting problem, and then at each step we had to  ensure  the $"1+\alpha"$  scale invariant decay of the boundary datum to eventually derive the existence of the first order limiting Taylor polynomial at the boundary point. 

Differently from \cite{BGM}, in the present work we   approximate the solutions  to \eqref{dp0} (corresponding to  $\phi=0$)  with $\Gamma^{k, \alpha}$ functions of the type $dP$   in order to ensure  homogeneous (zero) boundary conditions at each step, and then eventually we pass to the  $k$-th Taylor polynomial at the end of the iteration process. Although this gets rid of the difficulty of ensuring $k+\alpha$  decay of the boundary datum at every step, a serious new difficulty arises. In this approach the functions $dP$  are not in the kernel of the ``constant coefficient operator" $\sum_{i,j=1}^m a_{ij}(p_0) X_i X_j$. Therefore, in order to obtain the appropriate invariant decay of the right-hand side one has to incorporate the right correction factors. The latter turn out to be small perturbations of the Taylor polynomial of the limiting problem. This is precisely where we borrow some beautiful ideas from the works \cite{DS1, DS2}. In this endeavor the stratified nature of the Lie algebra plays a very crucial role and allows us to adapt the Euclidean type algorithms in \cite{DS1, DS2} to the sub-Riemannian framework. This represents one key novelty of our work. Once the $k+\alpha$ boundary decay is obtained, we combine it with the  interior Schauder estimates in \cite{CH, Xu} to deduce that the $k$-th order derivatives are H\"older continuous up to the boundary. In the present work, however, combining the interior estimates with the boundary decay is considerably subtler than the case $k=1$ treated in \cite{BGM}.

The paper is organised as follows. In Sections \ref{S:prel} and \ref{S:back} we introduce some basic notations and notions and gather some  known results that are relevant to our work. In Section \ref{S:main} we  prove our main result.

\section{Preliminaries on Carnot groups}\label{S:prel}

In this section we collect the definition and the basic properties of Carnot groups which will be used in the rest of the paper, referring the reader to \cite[Chap. 1 \& 2]{Gems} for a short and self-contained introduction to these geometric ambients. 

\begin{definition}\label{D:CG}
Given $r\in \mathbb N$, a \emph{Carnot group} of step $r$ is a simply-connected real Lie group $(\G, \circ)$ whose Lie algebra $\g$ is stratified and $r$-nilpotent. This means that there exist vector spaces $\g_1,...,\g_r$ such that:  
\begin{itemize}
\item[(i)] $\g=\g_1\oplus \dots\oplus\g_r$;
\item[(ii)] $[\g_1,\g_j] = \g_{j+1}$, $j=1,...,r-1,\ \ \ [\g_1,\g_r] = \{0\}$.
\end{itemize}
\end{definition}
The identity element in $\G$ will be routinely denoted by $e$. We assume that $\g$ is endowed with a
scalar product $\langle\cdot,\cdot\rangle$ with respect to which the vector spaces $\g_j's$, $j=1,...,r$, are mutually orthogonal.  We let $m_j =$ dim\ $\g_j$, $j=
1,...,r$, and denote by $N = m_1 + ... + m_r$ the topological
dimension of $\G$.  From the assumption (ii) on the Lie
algebra it is clear that any basis of the first layer $\g_1$ bracket generates the whole Lie algebra $\g$. Because of such special role $\g_1$ is usually called the horizontal
layer of the stratification. For ease of notation we henceforth write $m = m_1$. In the case in which $r =1$ we are in the Abelian situation in which $\g = \g_1$, and thus $\G$ is isomorphic to $\R^m$. We are primarily interested in the genuinely non-Riemannian setting $r>1$. The exponential map $\exp : \g \to \G$ defines an analytic
diffeomorphism of the Lie algebra $\g$ onto $\G$, see e.g. \cite[Sec. 2.10 forward]{V}, or also \cite{CGr}. Using such diffeomorphism, whenever convenient we will routinely identify a point $p = \exp \xi \in \G$ with its logarithmic image $\xi = \exp^{-1} p\in \g$. More explicitly, this means that if $p= \exp(\xi_1+...+\xi_r)$, where $\xi_1 = x_1 e_1+...+x_m e_m$, $\xi_2 = x_{2,1} e_{2,1}+...+x_{2,m_2}e_{2,m_2}$, ... , $\xi_r = x_{r,1} e_{r,1},...,x_{r,m_r} e_{r,m_r}$, we identify $p$ with 
\begin{equation}\label{p}
p \cong (\xi_1,...,\xi_r) = (x_1,...,x_m,x_{2,1},...,x_{r,1},...,x_{r,m_r}).
\end{equation}
At times, it will be expedient to break the variables by grouping those in the horizontal layer into a single one as follows
$x = x(p) \cong \xi_1 = (x_1,\ldots,x_m)$, and indicate with $y= y(p)$ the $(N - m)-$dimensional vector
\[
y \cong (\xi_2,\xi_3,\ldots,\xi_r) = (x_{2,1},\ldots,x_{2,m_2},\ldots,x_{r,1},\ldots,x_{r,m_r}).
\]
In this case, we will write $w = (x,y)$.
Given $\xi, \eta\in \g$, the Baker-Campbell-Hausdorff formula reads
\begin{equation}\label{BCH}
\exp(\xi) \circ \exp(\eta) = \exp{\bigg(\xi + \eta + \frac{1}{2}
[\xi,\eta] + \frac{1}{12} \big\{[\xi,[\xi,\eta]] -
[\eta,[\xi,\eta]]\big\} + ...\bigg)},
\end{equation}
where the dots indicate commutators of order four and higher, see \cite[Sec. 2.15]{V}. Since by (ii) in Definition \ref{D:CG} all commutators of order $r$ and higher are trivial, in every Carnot group the series in the right-hand side of \eqref{BCH} is finite. 
Using \eqref{BCH}, with $p = \exp \xi,  p' = \exp \xi'$, one can recover the group law $p \circ p'$ in $\G$ from the knowledge of the algebraic commutation relations between the elements of its Lie algebra. We respectively denote
by $L_p(p') = p \circ p'$ and $R_p(p') = p'\circ p$ the left- and right-translation operator by an element $p\in
\G$. We indicate by $dp$ the bi-invariant
Haar measure on $\G$ obtained by lifting via the exponential map
 the Lebesgue measure on $\g$. 

The stratification (ii) induces in $\g$ a natural one-parameter family of non-isotropic dilations by assigning to each
element of the layer $\g_j$ the formal degree $j$.
Accordingly, if $\xi = \xi_1 + ... + \xi_r \in \g$, with $\xi_j\in \g_j$,
one defines dilations on $\g$ by the rule
$\Delta_\lambda \xi = \lambda \xi_1 + ... + \lambda^r
\xi_r,$
and then use the exponential map to transfer such
anisotropic dilations to the group $\G$ as
follows
\begin{equation}\label{dil}
\delta_\lambda(p) = \exp \circ \Delta_\lambda \circ
\exp^{-1} p.
\end{equation}
Given $f:\G\to \R$, the action of $\{\delta_\lm\}_{\lm>0}$ on $f$ is defined by
\[
\delta_\lm f(p) = f(\delta_\lm(p)),\ \ \ \ \ \ \ \ \ \ \ p\in \G.
\]
A function $f:\G \to \R$ is called homogeneous of degree $\kappa\in \R$ if for every $\lm >0$ one has
$\delta_\lm f = \lm^\kappa f$.
A vector field $Y$ on $\G$ is homogeneous of degree $\kappa$ if for every $f\in C^\infty(\G)$ one has $Y(\delta_\lm f) = \lm^\kappa \delta_\lm(Yf)$.
The dilations \eqref{dil} are group automorphisms, and thus we have for any $p, p'\in \G$ and $\lambda>0$
\begin{equation}\label{ga}
(\delta_\lambda(p))^{-1} = \delta_\lambda(p^{-1}),\ \ \ \ \ \ \ \ \ \ \delta_\lambda(p) \circ \delta_\lambda(p') = \delta_\lambda(p\circ p').
\end{equation}

The homogeneous dimension of $\G$ with respect to \eqref{dil} is the number $Q = \sum_{j=1}^r j m_j.$ Such number plays an important role in the analysis of Carnot groups. 
The motivation for this name comes from the equation 
$$(d\circ\delta_\lambda)(p) = \lambda^Q dp,$$
where $dp$ denotes the push forward to $\G$ of Lebesgue measure on $\g$ (this defines a Haar measure on $\G$, see \cite{CGr}). In the
non-Abelian case $r>1$, one clearly has $Q>N$. A non-isotropic gauge in $\g$ which respects the dilations $\Delta_\lambda$, is given by $|\xi|_\g = \left(\sum_{j=1}^r ||\xi_j||^{2r!/j}\right)^{1/2r!}$, see \cite{F}. It is obvious that $|\Delta_\lambda \xi|_\g = \lambda |\xi|_\g$ for $\lambda>0$.
One defines a non-isotropic gauge in the group $\G$ by letting $|p| = |\xi|_\g$ for $p = \exp \xi$. Clearly, $|\cdot|\in C^\infty(\G\setminus\{e\})$, and moreover $|\delta_\lambda p| = \lambda |p|$ for every $p\in \G$ and $\lambda>0$. The pseudodistance 
\begin{equation}\label{pseudo}
d(p,p')  = |(p')^{-1} \circ p|,
\end{equation}
generated by such gauge is equivalent to the intrinsic, or Carnot-Carath\'eodory distance $d_C(g,g')$ on $\G$, i.e., there exists a universal constant $c_1>0$ such that for every $p, p'\in \G$
\begin{equation}\label{pseudoe}
c_1 d(p,p')  \le d_C(p,p') \le c_1^{-1} d(p,p'),
\end{equation}  
see \cite{F}, \cite{NSW}. We will almost always work with the pseudodistance \eqref{pseudo}, and denote by $B(p,s) = \{p'\in \G\mid d(p',p)<s\}$ the relative open balls. Hereafter, we indicate with $|E| = \int_E dg$ the Haar measure of a set $E\subset \G$.
One easily recognises that, with $\omega_C  = |B_C(1)|>0$ and $\omega = |B(1)|>0$, one has for every $p \in \G$ and $r > 0$, 
\begin{equation}\label{volb}
|B_C(p,r)| =\omega_C r^Q,\ \ \ \ \ \ \ \ \ \ \ |B(p,r)| =\omega r^Q.
\end{equation}
We will use the following result from \cite{NSW}. Denote by $d_e(p,p')$ the Riemannian distance in $\G$.   We will denote by $B_e(p,r)$, the corresponding ball  centered at $p$ of radius $r$  with respect to the Riemannian distance $d_e$. 
For every connected  $\Omega \subset\subset \G$ there exist $C,\varepsilon
>0$ such that for $p, p' \in \Omega$ one has
\begin{equation}\label{xy}
C d_e(p,p') \leq d_C(p,p')\leq C^{-1} d_e(p,p')^\varepsilon.
\end{equation}

Given a orthonormal basis $\{e_1,...,e_m\}$ of the horizontal layer $\g_1$ one associates corresponding left-invariant $C^\infty$ vector fields on $\G$ by the formula $X_i(p) = (L_p)_\star(e_i)$, $i=1,...,m$, where $(L_p)_\star$ indicates the differential of $L_p$. We note explicitly that, given a smooth function $u$ in $\G$, the derivative of $u$ in $p\in \G$ along the vector field $X_i$ is given by the Lie formula
\begin{equation}\label{lie}
X_i u(p)  = \frac{d}{dt} u(p \exp t e_i)\big|_{t=0}.
\end{equation}
The left-invariant vector fields defined by \eqref{lie} are homogeneous of degree $\kappa = 1$. We also note that, if $X_i^\star$ indicates the formal adjoint of $X_i$ in $L^2(\G)$, then $X_i^{\star} = -X_i$, see \cite{F}. If $\{e_{j,1},...,e_{j,m_j}\}$ denotes an orthonormal basis of the layer $\mathfrak g_j$, $j=2,...,r$, then as in \eqref{lie} we can define left-invariant vector fields $\{X_{j,k}\}_{k=1}^{m_j}$. With respect to this notation, we have the identification $X_{1,i}=X_i$, $i=1,...,m$. 

The \emph{carr\'e du champ} and the \emph{horizontal Laplacian} associated with the orthonormal basis $\{e_1,...,e_m\}$  are respectively given by
\begin{equation}\label{lhg}
|\nabla_{H} f|^2 = \sum_{j=1}^m (X_j f)^2,
\end{equation}
and
\begin{equation}\label{subl}
 \Delta_H = -\sum^m_{i=1}X^{\star}_{i}X_i=\sum^m_{i=1}X^2_i.
\end{equation}
It is clear that for every $f\in C^\infty(\G)$ one has
$\Delta_H(\delta_\lm f) = \lm^2 \delta_\lm(\Delta_H f)$.
Furthermore, by the assumptions (i) and (ii) in Definition \ref{D:CG} one immediately sees that the system $\{X_1,...,X_m\}$ satisfies the finite rank condition \begin{equation}\label{rank}\text{rank Lie} [X_1,\ldots,X_m] \equiv N, \end{equation}
therefore by  H\"ormander's theorem \cite{H} the operator $\Delta_H$ is hypoelliptic. However, when the step $r$ of $\G$ is $>1$ this operator fails to be elliptic at every point $p\in \G$.

In the proof of Theorem \ref{main} the specific structure of the $X_i$ in \eqref{lie} will be important, as well as the so-called stratified Taylor formula. With this in mind, we recall next some facts from \cite[Sec. C, p. 20]{FS}. A multi-index  $J\in (\mathbb N \cup \{0\})^N$ can be represented as 
\begin{equation}\label{I}
J = (\beta^J_1,\ldots,\beta^J_r) = (\beta^J_{1,1},\ldots,\beta^J_{1,m},\ldots,\beta^J_{r,1},\ldots,\beta^J_{r,m_r}).
\end{equation}  
Whenever the context is clear, for $i=1,...,r$, we will simply write $\beta_i$ instead of $\beta^J_i$. As customary, the length of $J$ is the number $|J| = \sum_{j=1}^r  |\beta_j| = \sum_{j=1}^r \sum_{s=1}^{m_j} \beta_{j,s}$. The \emph{weighted degree} of $J$ is instead defined by the equation
\begin{equation}\label{hl}
d(J) = \sum_{j=1}^r j |\beta_j| = \sum_{j=1}^r j (\sum_{s=1}^{m_j} \beta_{j,s}).
\end{equation}
Given $J$ as in \eqref{I}, and keeping \eqref{p} in mind, we consider the monomial $z^J$ defined by 
\[
z^J = \xi_1^{\beta_1} \cdots\ \xi_r^{\beta_r} = \prod_{j=1}^r x_{j,1}^{\beta_{j,1}} \cdots\ x_{j,m_j}^{\beta_{j,m_j}}.
\]
It is clear that the function $f(p) = z^J$ is homogeneous of weighted degree $\kappa = d(J)$. 
A \emph{stratified polynomial} in $\G$  is a function $P:\G\rightarrow \R$ which, in the logarithmic coordinates $z = (x,y)$, can be expressed as
\[
P(z)=\sum_J a_J z^J,
\]
where $a_J\in \R$. The \emph{weighted degree} of $P$ is the largest $d(J)$ for which the corresponding $a_J\not= 0$. For any  $\kappa\in \mathbb{N}\cup \{0\}$ we denote by $\mathscr{P}_\kappa$ the set of stratified polynomials in $\G$ of weighted degree less or equal to $\kappa$. 
The space $\mathscr{P}_\kappa$ is invariant under left- and right-translation. 

We recall that by the Baker-Campbell-Hausdorff formula \eqref{BCH} one obtains from \eqref{lie} the following expression 
\begin{equation}\label{fdG}
X_i  = \frac{\partial }{\partial{x_i}} +
\sum_{j=2}^{r}\sum_{s=1}^{m_j} b^s_{j,i}(\xi_1,...,\xi_{j-1})
\frac{\partial }{\partial{x_{j,s}}},
\end{equation}
where each $b^s_{j,i}$ is a homogeneous polynomial of
weighted degree $j-1$. This means that for $j=2,...,r$  we can write 
\[
b^s_{j,i}(\xi_1,...,\xi_{j-1})=\sum_{ d(I^{(j-1)})= j-1}b^s_{i,I^{(j-1)}}z^{I^{(j-1)}},
\]
where $I^{(j-1)}$  denotes a multi-index with zeroes in the last $N- \sum_{i=1}^{j-1} m_{i}$ coordinates, and $b^s_{i,I^{(j-1)}}\in \R$. More explicitly, if $I^{(j-1)}=(\beta_1,\ldots,\beta_{(j-1)},0,\ldots,0)$, then $z^{I^{(j-1)}}=\xi_1^{\beta_1}\cdots\xi^{\beta_{j-1}}_{j-1}$, and \eqref{fdG} can be alternatively written as 
\begin{equation}\label{xiex2}
X_i= \frac{\partial}{\partial x_i}+\sum^r_{j=2}\sum^{m_j}_{s=1}\left(\sum_{ d(I^{(j-1)})= j-1}b^s_{i,I^{(j-1)}}z^{I^{(j-1)}}\right)\frac{\partial}{\partial x_{j,s}}.
\end{equation}
Using \eqref{xiex2} it will be useful in the proof of Theorem \ref{main} to have the following expanded expression of \eqref{subl}. For $j\in\{1,...,r\}$ and $s\in \{1,...,m_j\}$, we will denote by $\overline{(j,s)}$ a multi-index which has 1 in the $(j,s)$ position and zeroes everywhere else.  When $j=1$, for convenience we will denote $\overline{(1,i)}$ with $\overline{i}$. With this notation in place, a tedious but trivial bookkeeping based on \eqref{xiex2} gives for \eqref{subl}
\begin{align}\label{subLap}
\Delta_H
&= \sum^{m}_{i=1}\frac{\partial^2}{\partial x^2_i}+2\sum^{m}_{i=1}\sum^r_{j=2}\sum^{m_j}_{s=1}\left(\sum_{d(I^{(j-1)})= j-1}b^s_{i,I^{(j-1)}}z^{I^{(j-1)}}\frac{\partial^2}{\partial x_{j,s}\partial x_i}\right)
\\
 & + \sum^{m}_{i=1}\sum^r_{j=2}\sum^{m_j}_{s=1}\left(\sum_{d(I^{(j-1)})= j-1}\alpha_{1,i} b^s_{i,I^{(j-1)}}z^{I^{(j-1)- \overline{i}}}\frac{\partial}{\partial x_{j,s}}\right)
 \notag\\ 
 &+\sum^{m}_{i=1}\sum^r_{j,j'=2}\sum^{m_j}_{s,s'=1}\left(\sum_{\substack{d(I^{(j-1)})= j-1\\  d(I^{(j'-1)})= j'-1 }}b^s_{i,I^{(j-1)}}b^{s'}_{i,I^{(j'-1)}}z^{I^{(j-1)}+I^{(j'-1)}}\right)\frac{\partial^2}{\partial x_{j,s}\partial x_{j',s'}}
\notag \\
&+\sum^{m}_{i=1}\sum^r_{\substack{j,j'=2\\ j'\leq j-1}}\sum^{m_j}_{s,s'=1}\left(\sum_{\substack{d(I^{(j-1)})= j-1\\  d(I^{(j'-1)})= j'-1 }}\alpha_{j',s'}b^s_{i,I^{(j-1)}}b^{s'}_{i,I^{(j'-1)}}z^{I^{(j-1)}-\overline{(j',s')}+I^{(j'-1)}}\right)\frac{\partial}{\partial x_{j,s}}.
\notag
\end{align}
We mention that the reason for introducing the expanded expression \eqref{subLap} of $\Delta_H$ is because, in \emph{Step 1} of the proof of Theorem \ref{main}, we eventually have to determine an appropriate approximating polynomial  by solving a linear system of the type \eqref{alg} below. In order to do so, it is crucial to have a precise knowledge regarding  the value of the $(1,m)$ coordinate of the various multi-indices  involved in the  expression \eqref{alg}.


\section{Further background material}\label{S:back}
 
In this section we collect some further known results that will be needed in the proof of Theorem \ref{main}. We begin with the following weak maximum principle, see \cite{BU}.

\begin{lemma}\label{comp}
Let $\Om\subset \G$ be a bounded open set, and $a_{ij}, b_i\in C(\Om)$. If $u\in \Gamma^2(\Om)\cap C(\overline \Om)$ satisfies $\sum_{i,j=1}^m a_{ij}(p) X_i X_j u + \sum_{i=1}^m b_i(p) X_i u \ge 0$ in $\Om$ and $u\le 0$ on $\partial \Om$, then $u\le 0$ in $\Om$.
\end{lemma}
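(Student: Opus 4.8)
The plan is to run the classical exponential-barrier argument for degenerate-elliptic operators; the only genuinely sub-Riemannian point is that the second-order part of the operator controls merely the \emph{horizontal} Hessian, which is nevertheless exactly the object that becomes negative semidefinite at an interior maximum.

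\textbf{Step 1 (strict subsolutions have no interior maximum).} I would first prove: if $w\in\Gamma^2(\Om)\cap C(\overline\Om)$ satisfies $\sum_{i,j}a_{ij}X_iX_jw+\sum_i b_iX_iw>0$ in $\Om$, then $\sup_\Om w=\max_{\partial\Om}w$. Suppose instead $w$ attains its maximum over $\overline\Om$ at some $p_0\in\Om$. For a horizontal direction $Z=\sum_{i=1}^m c_ie_i$ I consider $g_Z(t):=w(p_0\exp tZ)$, which is $C^2$ near $t=0$ since $\Om$ is open and $w\in\Gamma^2(\Om)$; the curve $t\mapsto p_0\exp tZ$ being the integral curve of the left-invariant field $Z$ through $p_0$, the Lie formula \eqref{lie} gives $g_Z'(0)=\sum_i c_iX_iw(p_0)$ and $g_Z''(0)=\sum_{i,j}c_ic_jX_iX_jw(p_0)$. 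Since $t=0$ is a maximum, $g_Z'(0)=0$ and $g_Z''(0)\le0$. Taking $Z=e_i$ yields $X_iw(p_0)=0$ for all $i$, so $\sum_i b_i(p_0)X_iw(p_0)=0$; letting $Z$ vary shows the symmetric matrix $H:=\big[\tfrac12(X_iX_j+X_jX_i)w(p_0)\big]_{i,j=1}^m$ obeys $c^{\mathsf T}Hc=\sum_{i,j}c_ic_jX_iX_jw(p_0)\le0$ for all $c\in\R^m$, i.e. $H\preceq0$. By symmetry of $\mathbb A$, $\sum_{i,j}a_{ij}(p_0)X_iX_jw(p_0)=\operatorname{tr}(\mathbb A(p_0)H)$, which is $\le0$ because $\mathbb A(p_0)\succeq\lambda\mathbb{I}_m\succ0$ by \eqref{ea0} and $H\preceq0$. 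Adding the two terms contradicts the strict inequality. Hence there is no interior maximum.

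\textbf{Step 2 (barrier and conclusion).} Next I would use the barrier $v(p):=e^{\gamma x_1}$, with $x_1$ the first horizontal coordinate of \eqref{p}. From the explicit form \eqref{fdG} of $X_i$, whose extra terms only involve $\partial_{x_{j,s}}$ with $j\ge2$ while $v$ depends on $x_1$ alone, one gets $X_iv=\gamma\delta_{1i}v$ and $X_iX_jv=\gamma^2\delta_{1i}\delta_{1j}v$, hence
\[
\sum_{i,j}a_{ij}X_iX_jv+\sum_i b_iX_iv=(a_{11}\gamma^2+b_1\gamma)v\ge(\lambda\gamma^2-\sup_\Om|b_1|\,\gamma)\,v>0\quad\text{in }\Om,
\]
for $\gamma$ large (this uses $\sup_\Om|b_1|<\infty$; in \eqref{dp0} one has $b_i\equiv0$, and in general this holds whenever the drift coefficients are bounded). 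For $\ve>0$, $w_\ve:=u+\ve v$ then satisfies $\sum_{i,j} a_{ij}X_iX_jw_\ve+\sum_i b_iX_iw_\ve>0$ in $\Om$, so by Step 1 together with the hypothesis $u\le0$ on $\partial\Om$,
\[
u+\ve v\le\max_{\partial\Om}(u+\ve v)\le\ve\max_{\partial\Om}v\quad\text{on }\overline\Om .
\]
Since $v>0$ and $\max_{\partial\Om}v<\infty$ (compactness of $\overline\Om$), letting $\ve\to0^+$ yields $u\le0$ in $\Om$.

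\textbf{Expected main obstacle.} The one place needing real care is Step 1: because $\Delta_H$-type operators are nowhere elliptic on $\R^N$ when the step $r>1$, the Euclidean second-derivative test does not apply directly. The resolution is the observation that along the group lines $t\mapsto p_0\exp tZ$ the operator $Z^2$ is a bona fide second derivative, so the horizontal Hessian $H$ is $\preceq0$ at an interior maximum — and, thanks to the horizontal ellipticity \eqref{ea0} and the symmetry of $\mathbb A$, that is precisely what is required. Everything else is the standard reduction to strict subsolutions via an exponential barrier; the only assumption tacitly used beyond those in the statement is boundedness of the $b_i$, which is harmless here.
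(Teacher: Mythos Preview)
The paper does not actually prove this lemma: it simply cites \cite{BU} (Bonfiglioli--Uguzzoni) for the result and moves on. So there is no ``paper's own proof'' to compare against; what you have written is a self-contained argument where the paper has none.

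Your argument is correct and is precisely the classical proof transplanted to the stratified setting. Step~1 is the crux, and you handle it properly: along the group line $t\mapsto p_0\exp(tZ)$ the left-invariant field $\tilde Z=\sum_i c_iX_i$ acts as an honest one-variable derivative, so the symmetrised horizontal Hessian is negative semidefinite at an interior maximum; symmetry of $\mathbb A$ lets you replace $\sum_{i,j}a_{ij}X_iX_jw$ by $\operatorname{tr}(\mathbb A H)$, and horizontal ellipticity \eqref{ea0} does the rest. Step~2 is routine; your computation of $X_iX_jv$ for $v=e^{\gamma x_1}$ from \eqref{fdG} is correct since $v$ depends only on a first-layer coordinate.

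The only point worth flagging is the one you already flag: you need $\sup_\Om|b_1|<\infty$ to choose $\gamma$, but the hypothesis is merely $b_i\in C(\Om)$. This is a genuine (small) gap in the lemma as stated, not in your reasoning. In the paper's sole use of the lemma (the comparison in the proof of Proposition~\ref{Lip}) the operator is $\mathscr L=\sum_{i,j}a_{ij}X_iX_j$ with $b_i\equiv0$, so the issue is moot there; and the reference \cite{BU} in fact works with intrinsically regular functions and local arguments that sidestep this, but for the purposes of the present paper your proof under the additional assumption $b_i\in L^\infty(\Om)$ is entirely adequate.
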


Another basic tool in this paper are the intrinsic H\"older classes $\Tau^{k, \alpha}$ of those functions that are H\"older continuous, together with their derivatives up to order $k$ along the vector fields \eqref{lie}, with respect to the Carath\'eodory distance $d_C(p,p')$, or equivalently with respect to the pseudodistance $d(p,p')$ in \eqref{pseudo}, see \eqref{pseudoe}. For the proofs of the relevant results we refer the reader to \cite[Sec. C on p. 20]{FS} and also \cite[Chap. 20]{BLU}.

\begin{definition}\label{hf}
Let $0<\alpha\leq 1$. Given an open set $\Om \subset \G$ we say that $f:\Om\to \R$ belongs to $\Tau^{0, \alpha}(\Om)$ if $f\in L^\infty(\Om)$ and there exists a constant $M>0$ such that for every $p, p' \in \Om$ one has 
$|f(p) - f(p')| \leq M\ d(p,p')^{\alpha}$.
We define the seminorm 
\begin{equation}\label{semi}
[f]_{\Tau^{0,\alpha}(\Om)}= \underset{\underset{p \neq p'}{p, p'\in \Om}}{\sup} \frac{|f(p)-f(p')|}{d(p,p')^{\alpha}}.
\end{equation}
\end{definition}
From \eqref{pseudoe} and \eqref{xy} it is obvious that if $f\in \Tau^{0, \alpha}(\Om)$, then $f\in C(\Om)$.
We now define the higher order H\"older spaces. If $k\in \mathbb N$ and $I = (i_1,...,i_k)$, where $1\le i_j\le m$, and $j=1,...,k$. We set $|I| = k$, and let 
\begin{equation}\label{hder}
X^I = X_{i_1}...X_{i_k}.
\end{equation}

\begin{definition}\label{higherholder}
Let $k \in \mathbb N \cup \{0\}$, $0 < \alpha \leq 1$ and $\Om \subset G$ be an open connected set. The space $\Gamma^{k, \alpha}(\Om)$ denotes the space of all functions $f$ for which $X^{I} f$ exists for $|I| \leq k$ and such that the following quantity is finite
\[
||f||_{\Gamma^{k, \alpha}(\Om)} \overset{def}= \sum_{|I| \leq k} ||X^If||_{L^{\infty}(\Om)} + \sum_{|I|=k} [X^I f]_{\Gamma^{0, \alpha}(\Om)} < \infty.\]
\end{definition}

\subsection{Taylor polynomials and stratified Taylor inequality}
We first introduce the notion of a Taylor polynomial, see \cite[pages 26-27]{FS}.
\begin{definition}\label{tp}
Let $f$ be a function whose derivatives $X^{I} f$ are continuous for $|I| \leq k$. The $k$-th Taylor polynomial of $f$ at $g_0 \in \G$ is the unique polymomial $P_{g_0}$ of homogeneous degree at most $k$ such that
\[
X^I f(g_0) =  X^I P_{g_0} (e),
\]
for all $I$ such that $|I| \leq k$.

\end{definition}
\begin{remark}
We mention that in view of the bracket generating condition in \eqref{rank} above,  the notion of a Taylor polynomial in Definition \ref{tp} is the same  as  that in \cite[pages 26-27]{FS}. \end{remark}

We now state the relevant stratified Taylor inequality which can be found in \cite[pages 33-35]{FS}.
\begin{theorem}[Stratified Taylor inequality]\label{stp}
For each positive integer $k$, there exist constants $c_k, b>0$ such that for all $f$ for which $X^{I} f$ is continuous for $|I| \leq k$, one has
\begin{equation}\label{stp1}
|f(g_0 g) - P_{g_0} (g)| \leq c_k |g|^k \operatorname{sup}_{|p| \leq b^k |g|, |I|=k} |X^I f(g_0 p ) - X^I f(g_0)|,
\end{equation}
where $P_{g_0}$ is the $k$-th order Taylor polynomial of $f$ at $g_0$. 
\end{theorem}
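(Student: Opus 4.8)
The plan is to prove the stratified Taylor inequality \eqref{stp1} by a standard integration/iteration argument along the one-parameter subgroups generated by the horizontal fields $X_1,\dots,X_m$, reducing the multi-step estimate to $k$ successive applications of the fundamental theorem of calculus in the form \eqref{lie}. First I would fix $g_0$ and reduce to the case $g_0=e$ by left-translating: since the $X_i$ are left-invariant, the hypothesis, the Taylor polynomial and both sides of \eqref{stp1} transform correctly under $f\mapsto f\circ L_{g_0}$, so it suffices to show $|f(g) - P_e(g)|\le c_k|g|^k \sup_{|p|\le b^k|g|,\,|I|=k}|X^If(p)-X^If(e)|$. Next, using that $\exp$ is a diffeomorphism and that any $g$ can be written as a product of $\exp(t_\ell e_{i_\ell})$ of boundedly many horizontal one-parameter subgroup elements with $\sum|t_\ell|\lesssim |g|$ (a consequence of the bracket-generating/ball-box structure recorded around \eqref{xy}--\eqref{volb}), I would write the increment $f(g)-f(e)$ as a telescoping sum of increments along these horizontal segments.

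The core step is the one-dimensional Taylor expansion along a horizontal segment: for $\gamma(t)=p\exp(te_i)$ one has $\frac{d}{dt}h(\gamma(t)) = (X_i h)(\gamma(t))$ by \eqref{lie}, so iterating $k$ times, $h(\gamma(1)) = \sum_{j=0}^{k-1}\frac{1}{j!}(X_i^j h)(p) + \frac{1}{(k-1)!}\int_0^1 (1-t)^{k-1}(X_i^k h)(\gamma(t))\,dt$. I would apply this along each segment in the telescoping decomposition, collecting the polynomial part (the terms with $j\le k-1$) and controlling the remainder. The remainder involves only $k$-th order pure derivatives $X_i^k$ evaluated at points $p$ with $|p|\le b^k|g|$, which is exactly the quantity on the right of \eqref{stp1}; one peels off $X_i^kf(e)$ from each such term and bounds the difference $X_i^kf(\gamma(t))-X_i^kf(e)$ by the supremum, while the constant $X_i^kf(e)$ contributions assemble (after the iteration over all segments) into the remaining monomials of the homogeneous-degree-$k$ polynomial. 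Matching the collected polynomial part against $P_e$ is where one uses the defining property $X^If(e)=X^IP_e(e)$ from Definition \ref{tp}: because the $X_i$ bracket-generate and the monomials $z^J$ of weighted degree $\le k$ form a basis adapted to the dilations, the polynomial produced by the iteration agrees with $P_e$ up to terms of weighted degree $\ge k$ whose coefficients are themselves the controlled $k$-th order differences, hence absorbed into the error. The homogeneity bookkeeping — each factor $\exp(t_\ell e_{i_\ell})$ contributes a factor comparable to $|g|$ to degree-one objects — produces the overall $|g|^k$ scaling and fixes the constants $c_k,b$ in terms of $k$ and the group structure only.

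The main obstacle I expect is the combinatorial/algebraic matching of the polynomial part against $P_e$: the telescoping-plus-one-dimensional-Taylor procedure naturally produces products of the operators $X_{i_\ell}$ applied in a fixed order along the chosen path, whereas $P_e$ is characterised through \emph{all} $X^I$ with $|I|\le k$; reconciling these requires that the non-commutativity of the $X_i$ only generates higher-weighted-degree corrections (by the stratification \eqref{fdG}), so that modulo the error term the path-dependent polynomial is path-independent and equals $P_e$. A secondary technical point is keeping the radius constraint $|p|\le b^k|g|$ honest throughout the nested integrations: each successive integration moves the base point along another horizontal segment, so one must track that all intermediate points stay in a gauge ball of radius a fixed multiple of $|g|$, which is where the exponent $k$ in $b^k$ enters. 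Both points are handled by induction on $k$, using the step-$r$ nilpotency to truncate all BCH-type expansions. Since the statement is quoted verbatim from \cite{FS}, I would in the actual write-up simply cite \cite[pages 33--35]{FS} and \cite[Chap.~20]{BLU} rather than reproduce this argument, but the sketch above is the proof those references carry out.
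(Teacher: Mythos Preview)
The paper does not prove this theorem at all: it is stated as background material with the sentence ``We now state the relevant stratified Taylor inequality which can be found in \cite[pages 33--35]{FS}'' and no argument is given. Your final recommendation---to simply cite \cite[pages 33--35]{FS} and \cite[Chap.~20]{BLU}---is therefore exactly what the paper does, and your sketch is a faithful outline of the Folland--Stein proof those references contain (reduce to $g_0=e$ by left-invariance, decompose $g$ as a bounded product of horizontal exponentials, apply the one-variable Taylor/mean-value theorem along each segment, and use the stratification to absorb commutator corrections into higher weighted degree).
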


\subsection{The characteristic set}\label{SS:char}

We recall that an open set $\Om \subset \G$ is said to be of class $C^1$ if for every $p_0\in \pa\Om$ there exist a neighborhood $U_{p_0}$ of $p_0$, and a function $\vf_{p_0} \in C^1(U_{p_0})$, with $|\nabla \vf_{p_0}| \geq \alpha > 0$ in $U_{p_0}$, such that
\begin{equation}\label{ome}
\Om \cap U_{p_0} = \{p\in U_{p_0} \mid \vf_{p_0}(p)<0\}, \quad\pa\Om\cap U_{p_0} =\{p\in U_{p_0} \mid \vf_{p_0}(p)=0\}.
\end{equation}
At every point $p\in \partial \Om \cap U_{p_0}$ the outer unit normal is given by
\[
\nu(p) = \frac{\nabla \vf_{p_0}(p)}{|\nabla \vf_{p_0}(p)|},
\] 
where $\nabla$ denotes the Riemannian gradient.

\begin{definition}\label{D:char}
Let  $\Om \subset \G$ be an open set of class $C^1$. A point $p_0\in \pa\Om$ is called \emph{characteristic} if  one has
\begin{equation}\label{cp}
\nu(p_0) \perp \operatorname{linear\ span\ of}\ \{X_1,..,X_m\}. 
\end{equation}
The \emph{characteristic set} $\Sigma = \Sigma_{\Om}$ is the collection of all characteristic points of $\Om$. A boundary point $p_0 \in \pa \Om \setminus \Sigma$ will be referred to as a \emph{non-characteristic} boundary point. 
\end{definition}

We note explicitly that \eqref{cp} is equivalent to saying that,
given any couple $U_{p_0}, \vf_{p_0}$ as in \eqref{ome}, and the family of horizontal vector fields $\{X_1,...,X_m\}$ as in \eqref{lie} above, one has 
\begin{equation}\label{cp2}
X_1\vf_{p_0}(p_0) = 0, \ldots, X_m\vf_{p_0}(p_0) = 0,
\end{equation}
or equivalently
\begin{equation}\label{cp3}
 |\nabla_H \vf_{p_0}(p_0)| = 0.
\end{equation}
Bounded domains generically have non-empty characteristic set if they have a trivial topology. For instance, in the Heisenberg group $\Hn$, with logarithmic coordinates $(x,y,t)\in \R^{2n+1}$, every bounded $C^1$ open set which is homeomorphic to the sphere $\mathbb S^{2n}\subset \R^{2n+1}$ must have at least one characteristic point. A torus obtained by revolving around the $t$-axis a closed simple curve in $\mathbb H^1$ non intersecting the $t$-axis itself, provides an example of a non-characteristic domain. For instance, the domain bounded by $\{(x,y,t)\in \mathbb H^1\mid 16 t^2 + (x^2 +y^2 - 2)^2 = 1\}$ is non-characteristic. For a discussion of these aspects we refer the reader to \cite{CG} and \cite{CGN}. Examples of unbounded non-characteristic domains are the following. In $\mathbb H^1$ consider the unbounded domain $\Om= \{(x,y,t)\in \mathbb H^1\mid x < yt\}$. Its boundary $S = \partial \Om$ is an entire graph which is non-characteristic. We can in fact describe $S$ using the global defining function $\vf(x,y,t) = x - yt$, for which we have $X_1 \phi = \phi_x - \frac{y}2 \phi_t = 1 + \frac{y^2}2\ge 1$, for every $(x,y,t)\in S$. In view of \eqref{cp2} this shows that $\Sigma_\Om = \varnothing$. Furthermore, $S$ is an $H$-minimal surface that was shown to be unstable in \cite{DGN}. If instead, for a fixed vector $a \in \g_1 \setminus \{0\}$ and for $\lambda \in \R$, we consider the so-called \emph{vertical half-space}
 \[
H^+_a = \{p\in\G\mid \langle x(p),a\rangle > \lambda\}.
 \]
Then, using the global defining function $\vf(p) = \lm - \langle x(p),a\rangle$, one has $|\nabla_H \vf(p)| \equiv |a|>0$, and thus in view of \eqref{cp3} one has $\Sigma = \Sigma_{H^a_+} = \varnothing$. In $\mathbb H^n$ such half-spaces were shown to be stable $H$-minimal surfaces in \cite{DGNP}.

In the proof of Theorem \ref{main} we will need the following by now classical smoothness result at non-characteristic points of Kohn and Nirenberg, see \cite{KN} and also \cite{De}. We recall the notation \eqref{VS}.

\begin{theorem}\label{KN}
  Assume that $\Om$ be a $C^\infty$ domain, and let $u\in \mathscr{L}^{1,2}_{loc}(\Om)\cap C(\overline{\Om})$ be a weak solution of 
$\Delta_{H} u=0$ in $\mathscr V_{r_0}(p_0)$, $|u| \leq M$, $u=0$ on $\mathscr S_{r_0}(p_0)$, where $\mathscr S_{r_0}(p_0)$ consists of non-characteristic  boundary points. Then given any $k \in \mathbb N$,  there exists  a positive constant $C^\star=C^\star(\Om, M, p_0, k)>0$ such that
 \begin{equation}
 \label{c2u}
 \|u\|_{C^k(\overline{\Om}\cap V)}\leq C^\star.
 \end{equation}
\end{theorem}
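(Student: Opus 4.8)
The plan is to deduce Theorem~\ref{KN} from a subelliptic a priori estimate up to the boundary, following Kohn and Nirenberg \cite{KN}; the decisive structural fact is that at a non-characteristic point one of the horizontal fields is transverse to $\pa\Om$, so the ``missing'' transverse direction can be recovered directly from the equation. First I would fix, near $p_0$, a local $C^\infty$ defining function $\vf$ as in \eqref{ome}. Since $p_0$ is non-characteristic, $|\nabla_H\vf(p_0)|\neq 0$ by \eqref{cp3}; because $\Delta_H=\sum_{i=1}^m X_i^2$ is unchanged under an orthogonal change of the orthonormal basis of $\g_1$, we may assume $X_1\vf(p_0)\neq 0$, hence $X_1\vf\neq 0$ on a neighbourhood $U\subset B(p_0,r_0)$ of $p_0$, where $X_1$ is then transverse to $\pa\Om\cap U$. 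Replacing each of the remaining left-invariant frame fields $X_2,\dots,X_m,X_{2,1},\dots,X_{r,m_r}$ by its tangential correction $Y:=X-(X\vf/X_1\vf)\,X_1$, one obtains $N-1$ smooth vector fields on $U$ annihilating $\vf$ which, being a shear of a global frame of $T\G$, span at each point the tangent space to the level set of $\vf$ through that point. Since $u\equiv 0$ on $\pa\Om\cap U$, any differential operator in these tangential fields applied to $u$ again vanishes on $\pa\Om\cap U$.

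Next I would establish tangential a priori bounds. Testing $\Delta_H u=0$ against $\zeta^2 u$ with $\zeta\in C_c^\infty(U)$, $\zeta\equiv 1$ near $p_0$, and integrating by parts --- with no boundary term, since $u=0$ on $\pa\Om\cap U$ --- gives $\int_\Om \zeta^2|\nabla_H u|^2\,dp\le C\|u\|_{L^2(\V_{r_0}(p_0))}^2\le CM^2|B(p_0,r_0)|$. Applying the same scheme to difference quotients of $u$ along the flows of the tangential fields above, which preserve $\pa\Om\cap U$ and the homogeneous Dirichlet condition, and iterating, one would control in $L^2_{\mathrm{loc}}$ up to $\pa\Om\cap U$ all tangential derivatives of $u$, with constants depending only on $\Om$, $M$, $p_0$ and the order; here one must keep a commutator bookkeeping adapted to the stratified weights of $\g$ so that no net loss of derivatives accumulates.

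Then I would recover the transverse derivatives from the equation. Writing $X_i=Y_i+a_i X_1$ with $a_i\in C^\infty(U)$ for $i\ge 2$, $a_1:=1$, $Y_1:=0$, and expanding, one finds
\[
0=\Delta_H u=\Lambda\,X_1^2 u+R u,\qquad \Lambda:=1+\sum_{i=2}^m a_i^2\ \ge\ 1,
\]
where $R$ is a second-order operator containing at most one factor of $X_1$, together with the tangential fields and smooth coefficients. Since $\Lambda^{-1}\in C^\infty(U)$ one solves $X_1^2u=-\Lambda^{-1}Ru$, and by a double induction on the total order and on the number of $X_1$-factors, combining this identity with the tangential bounds of the previous step, one obtains estimates for $X^I u$ in $L^2_{\mathrm{loc}}$ up to $\pa\Om\cap U$ for every horizontal multi-index $I$. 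The bracket-generating condition \eqref{rank} then upgrades control of all horizontal derivatives to control of all Euclidean derivatives of $u$, and the Sobolev embedding theorem on the smooth domain $\Om\cap U$ yields $u\in C^k(\overline\Om\cap V)$ together with \eqref{c2u} on a slightly smaller neighbourhood $V$ of $p_0$.

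The main difficulty will be the tangential subelliptic estimate up to the boundary: one must show that difference quotients along the tangential frame genuinely control all tangential derivatives in $L^2$ right up to $\pa\Om$, with no uncontrolled boundary contributions and with each commutator remaining, in the appropriate weighted count, of the right order so that the induction closes --- this is precisely the content of the subelliptic boundary estimates of \cite{KN}, and it is here that the stratified structure of $\g$ must be exploited carefully. Once those $L^2$-Sobolev bounds are in hand, the passage to $C^k$ via \eqref{rank} and Sobolev embedding is routine.
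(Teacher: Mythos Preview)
The paper does not prove Theorem~\ref{KN}; it is quoted from the literature with the sentence ``the following by now classical smoothness result at non-characteristic points of Kohn and Nirenberg, see \cite{KN} and also \cite{De}'' and then used as a black box in the compactness argument of Lemma~\ref{MCL}. So there is no proof in the paper to compare your proposal against.

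That said, your outline is a faithful sketch of the classical Kohn--Nirenberg/Derridj argument: at a non-characteristic point one horizontal field is transverse, the remaining $N-1$ directions can be made tangential, tangential regularity is propagated by difference quotients (which preserve the zero Dirichlet datum), and the missing transverse direction is recovered algebraically from $\Delta_H u=0$ since the coefficient $\Lambda$ of $X_1^2$ is bounded below. The step you flag as the main difficulty is indeed the crux: the commutator of $\Delta_H$ with a tangential field is \emph{not} itself tangential in general, and one needs the subelliptic estimate (with loss of $\varepsilon$ derivatives) of \cite{KN} to absorb these commutator errors and close the induction. Your write-up would need to make explicit that the tangential derivatives you gain at each step are measured in the horizontal Sobolev scale, and that the bracket condition \eqref{rank} is what converts horizontal $L^2$ regularity of arbitrary order into ordinary $C^k$ regularity; as stated, the passage ``control of all horizontal derivatives to control of all Euclidean derivatives'' hides a genuine subelliptic estimate rather than a trivial algebraic identity.
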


We will also need the following interior regularity result which can be found in \cite{CH, Xu}.
\begin{theorem}\label{intreg}
Let $u$ be a solution to $\sum_{i,j=1}^m a_{ij}(p) X_i X_j u=f$
in a gauge ball $B(p_0,1)$. Suppose furthermore that for some $k \in \mathbb N$ such that $k \geq 2$ and  $0<\alpha<1$,  one has $a_{ij}, f \in \Gamma^{k-2, \alpha}(B(p_0,1))$. Then $u \in \Gamma^{k,\alpha}(B(p_0,1/2))$, and there exists a constant $C = C(\G,\lambda,[a_{ij}]_{\Gamma^{k-2, \alpha}(B(p_0,1))})>0$ such that the following estimates hold
\begin{equation}\label{i10}
 \underset{|J|=k}{\sup} ||X^J u||_{L^{\infty}(B(p_0,1/2))}  \leq  C \left\{||u||_{L^{\infty}(B(p_0,1))} + \sum_{|I| \leq k-2} ||X^I f||_{L^{\infty}(B(p_0, 1))} + \sum_{|I|=k-2} [X^I f]_{\Gamma^{0, \alpha}(B(p_0, 1))} \right\},
\end{equation}
and
\begin{equation}\label{i100}
\underset{|J|= k}{\sup} [X^J u]_{\Gamma^{0,\alpha}(B(p_0,1/2))}  \leq C \left\{||u||_{L^{\infty}(B(p_0,1))} + \sum_{|I| \leq k-2} ||X^I f||_{L^{\infty}(B(p_0, 1))} + \sum_{|I|=k-2} [X^I f]_{\Gamma^{0, \alpha}(B(p_0, 1))} \right\}, 
\end{equation}
where  $[\ \cdot\ ]_{\Gamma^{0, \alpha}(B(p_0,1))}$ represents the  seminorm  defined by \eqref{semi} above.
\end{theorem}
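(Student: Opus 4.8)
The plan is to regard \eqref{i10}--\eqref{i100} as \emph{a priori} estimates and to establish them by the classical Levi parametrix method, built on Folland's homogeneous fundamental solution of a sub-Laplacian on a stratified group together with the Folland--Stein singular integral calculus (this is essentially the route of \cite{CH, Xu}; below I indicate the main moving parts, and at the end a Campanato-type alternative closer in spirit to the present paper). First, a standard mollification/approximation argument --- regularize the $a_{ij}$ and $f$, use that the regularized equations have $C^\infty$ solutions by hypoellipticity, and pass to the limit via the weak maximum principle of Lemma \ref{comp} together with the estimates themselves --- reduces matters to proving \eqref{i10}--\eqref{i100} for $u\in C^\infty$. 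Left-translating, assume $p_0=e$. Since $\mathbb A(e)$ is symmetric and positive definite, the change of basis $\widetilde X_i=\sum_l(\mathbb A(e)^{1/2})_{il}X_l$ of the horizontal layer $\g_1$ is an automorphism of the stratification --- the dilations $\delta_\lambda$ and the homogeneous dimension $Q$ are untouched --- and carries the \emph{frozen} operator $L_0:=\sum_{i,l}a_{il}(e)X_iX_l$ into the model sub-Laplacian $\sum_i\widetilde X_i^2$ of \eqref{subl}, with expanded form \eqref{subLap}. Replacing $X_i$ by $\widetilde X_i$ (which alters the $\Gamma^{j,\alpha}$ norms only by constants depending on $\lambda$), we may assume $L_0=\Delta_H$.

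The core is the \emph{model estimate}: for $0<\theta'<\theta$ and $j\ge 0$,
\[
\|v\|_{\Gamma^{j+2,\alpha}(B(e,\theta'))}\ \le\ C\big(\|\Delta_H v\|_{\Gamma^{j,\alpha}(B(e,\theta))}+\|v\|_{L^\infty(B(e,\theta))}\big),
\]
which I would prove as follows. Let $\Gamma_0\in C^\infty(\G\setminus\{e\})$ be Folland's fundamental solution of $\Delta_H$ \cite{F}, homogeneous of degree $2-Q$ under $\delta_\lambda$. Fix a cutoff $\chi$ with $\chi\equiv1$ on $B(e,\theta')$ and $\operatorname{supp}\chi\subset B(e,\theta)$; then $\chi v=\Gamma_0*\big(\chi\Delta_H v+2\sum_{i=1}^m(X_i\chi)(X_i v)+(\Delta_H\chi)v\big)$, where the last two terms are supported where $\nabla\chi\ne0$ and carry at most one horizontal derivative of $v$, hence are absorbed by an inner induction on $j$ (they are ``one order better localized''). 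For a multi-index $|I|=j+2$, write $X^I=X^{I'}X^{I''}$ with $|I''|=2$; using the rules for differentiating convolutions and the standard interchange between left- and right-invariant vector fields, one expresses $X^Iv$, modulo the above error terms, as $R$ applied to a $j$-th order derivative of $\chi\Delta_H v$, where $R=(X^{I''}\Gamma_0)*\,\cdot$ is a convolution Calder\'on--Zygmund operator on $\G$ --- its kernel being homogeneous of degree $-Q$ with vanishing mean value --- bounded on $\Gamma^{0,\alpha}$ by the Folland--Stein theory \cite{FS}. Since a $j$-th order derivative of $\chi\Delta_H v$ lies in $\Gamma^{0,\alpha}$ when $\Delta_H v\in\Gamma^{j,\alpha}$, this yields the claim for the top-order derivatives; the intermediate orders follow by standard interpolation in the $\Gamma^{k,\alpha}$ scale. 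The stratified Taylor inequality of Theorem \ref{stp} is used to match the convolution bounds with pointwise Hölder oscillations.

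For the variable-coefficient equation I would then write $\Delta_H u=f+(\Delta_H-L)u=f+\sum_{i,l}(\delta_{il}-a_{il})X_iX_l u$ and localize to a small ball $B(e,\rho)$. After rescaling by $\delta_\rho$, the coefficients $\delta_{il}-a_{il}$ acquire a factor $\lesssim[\mathbb A]_{\Gamma^{0,\alpha}}\rho^\alpha$, so the map $w\mapsto\Gamma_0*\big(\chi(\Delta_H-L)w\big)$, composed with cutoffs as above, is a contraction on $\Gamma^{k,\alpha}(B(e,\rho))$ once $\rho$ is small --- \emph{provided} $\delta_{il}-a_{il}\in\Gamma^{k-2,\alpha}$, which is precisely the hypothesis $a_{ij}\in\Gamma^{k-2,\alpha}$. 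A Neumann-series/fixed-point argument then gives $u\in\Gamma^{k,\alpha}(B(e,\rho))$ with the estimate, and a finite covering of $B(e,1/2)$ by such balls, together with interpolation for the lower-order terms, produces \eqref{i10}--\eqref{i100}, with $C$ depending on $\G,\lambda$ and on $[a_{ij}]_{\Gamma^{k-2,\alpha}(B(e,1))}$ through the number of balls and the contraction constant.

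I expect the main obstacle to be the model estimate on a \emph{general} Carnot group, rather than on $\Hn$: one must check that convolution against $\Gamma_0$ ``gains two weighted degrees'' on the whole scale $\Gamma^{j,\alpha}$ --- i.e. verify the homogeneity, smoothness and cancellation conditions making $(X^{I''}\Gamma_0)*\,\cdot$ a Calder\'on--Zygmund operator bounded on $\Gamma^{0,\alpha}$, and handle the conversion between left- and right-invariant derivatives --- and then organize the cutoff and commutator error terms into a genuine finite induction on $j$. This is exactly where the homogeneous structure of $\G$ and Theorem \ref{stp} are indispensable. An alternative to the last two paragraphs is a Campanato-type freezing scheme in the spirit of \cite{DS1, DS2}: compare $u$ on balls $B(e,r)$ with the solution $v$ of $\Delta_H v=f(e)$ sharing its boundary values, use the interior $C^\infty$ bounds for $\Delta_H$ to get the decay at rate $r^2$ of the $L^2$-oscillation of the second horizontal derivatives of $v$, and absorb the error $\|f-f(e)\|_{L^\infty}+\|(\mathbb A-\mathbb A(e))X^2u\|_{L^\infty}\lesssim r^\alpha$ to obtain the Campanato decay of $X^Iu$ for $|I|=2$; the higher-order estimates are then reached by the same scheme applied to suitable approximating stratified polynomials.
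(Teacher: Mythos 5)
The paper offers no proof of Theorem \ref{intreg}: it is quoted verbatim from the references \cite{CH, Xu}, so there is nothing internal to compare your argument against. What you have written is, in outline, a faithful reconstruction of the standard arguments in exactly those references --- the parametrix/singular-integral route with frozen coefficients and a Neumann-series absorption is essentially Xu's (and Folland--Stein's) scheme, while your closing Campanato-type alternative is close to the pointwise polynomial-approximation method of Capogna--Han. Both yield the scale-invariant form \eqref{i10}--\eqref{i100} after the covering and interpolation steps you describe, so as a blind substitute for the citation the proposal is acceptable in structure.

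One assertion is genuinely wrong as stated, though harmlessly so. The linear map $e_i\mapsto\sum_l(\mathbb A(e)^{1/2})_{il}e_l$ of $\g_1$ does \emph{not} in general extend to an automorphism of the stratified Lie algebra: the extension must intertwine the bracket $\g_1\times\g_1\to\g_2$, and already in $\Hn$ only conformal-symplectic maps of the horizontal layer qualify, not arbitrary positive symmetric ones. The reduction survives for a different reason: $L_0=\sum_{i,l}a_{il}(e)X_iX_l=\sum_i\widetilde X_i^2$ is in any case a left-invariant operator, homogeneous of degree $2$ under the \emph{original} dilations $\delta_\lambda$, and hypoelliptic since the $\widetilde X_i$ still span $\g_1$ and hence satisfy \eqref{rank}; Folland's theory then supplies a fundamental solution homogeneous of degree $2-Q$ directly for $L_0$, which is all your parametrix needs. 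You should also be explicit that hypoellipticity of the variable-coefficient operator $\sum a_{ij}X_iX_j$ (needed in your mollification step) is not a direct consequence of H\"ormander's theorem but of the theory for non-divergence operators structured on H\"ormander vector fields (as in \cite{BBLU}), and that the boundedness of $(X_iX_j\Gamma_0)*\,\cdot$ on $\Gamma^{0,\alpha}$, together with the left/right-invariant derivative interchange, is the substantive input from \cite{F, FS} that your induction on $j$ rests on.
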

As a corollary, we have the following rescaled estimate.

\begin{corollary}\label{intreg1}
Let $u$ be a solution to 
$\sum_{ij} a_{ij}(p) X_i X_j u=f$
in a gauge ball $B(p_0,s)$. Suppose furthermore that for some $k \in \mathbb N$ such that $k \geq 2$ and  $0<\alpha<1$,  one has $a_{ij}, f \in \Gamma^{k-2, \alpha}(B(p_0,s))$. Then $u \in \Gamma^{k,\alpha}(B(p_0,s/2))$, and there exists a constant $C = C(\G,\lambda,[a_{ij}]_{\Gamma^{k-2, \alpha}(B(p_0,1))})>0$ such that the following estimates hold
\begin{align}\label{rei1}
& \operatorname{sup}_{|J|=k} ||X^J u||_{L^{\infty}(B(p_0,s/2))} \\& \leq  \frac{C}{s^k} \left\{||u||_{L^{\infty}(B(p_0,s))} + \sum_{|I| \leq k-2} s^{2+|I|} ||X^I f||_{L^{\infty}(B(p_0, s))} + \sum_{|I|=k-2} s^{2+|I|+\alpha}[X^I f]_{\Gamma^{0, \alpha}(B(p_0, s))} \right\}\notag,
\end{align}
and
\begin{align}\label{rei2}
&\underset{|J|= k}{\sup} [X^J u]_{\Gamma^{0,\alpha}(B(p_0,s/2))}\\&  \leq \frac{C}{s^{k+\alpha}} \left\{||u||_{L^{\infty}(B(p_0,s))} + \sum_{|I| \leq k-2} s^{2+|I|} ||X^I f||_{L^{\infty}(B(p_0, s))} + \sum_{|I|=k-2} s^{2+|I|+\alpha}[X^I f]_{\Gamma^{0, \alpha}(B(p_0, s))} \right\}.\notag
\end{align}
\end{corollary}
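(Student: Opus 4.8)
The final statement is Corollary~\ref{intreg1}, the rescaled version of the interior estimates in Theorem~\ref{intreg}. The plan is to deduce it from Theorem~\ref{intreg} by the standard Carnot-group scaling argument, the only subtlety being the book-keeping of how the anisotropic dilations $\delta_\lambda$ act on the various objects in play.

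\textbf{Setup by dilation.} First I would reduce to the unit-ball case by composing with a left-translation and a dilation. Set $v(q) = u(p_0 \circ \delta_s(q))$ for $q \in B(e,1)$; since left-translations are isometries for the pseudodistance and $\delta_s$ maps $B(e,1)$ onto $B(e,s)$, the map $q\mapsto p_0\circ\delta_s(q)$ carries $B(e,1)$ onto $B(p_0,s)$. Because the $X_i$ are left-invariant and homogeneous of degree $1$ (see \eqref{lie} and the remark following it), we get $X_i v(q) = s\,(X_i u)(p_0\circ\delta_s(q))$, and more generally $X^J v(q) = s^{|J|}(X^J u)(p_0\circ\delta_s(q))$ for any multi-index $J$. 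Writing $\tilde a_{ij}(q) = a_{ij}(p_0\circ\delta_s(q))$ and $\tilde f(q) = s^2 f(p_0\circ\delta_s(q))$, the equation $\sum_{i,j}a_{ij}X_iX_ju=f$ on $B(p_0,s)$ transforms into $\sum_{i,j}\tilde a_{ij}(q)X_iX_jv = \tilde f$ on $B(e,1)$. The ellipticity constant $\lambda$ is unchanged since $\tilde a_{ij}$ is just a reparametrisation of $a_{ij}$.

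\textbf{Applying the unit-scale estimate and translating back.} Next I would check that the H\"older data transform with the right powers of $s$. Since $d(\delta_s(q),\delta_s(q')) = s\,d(q,q')$, one has $\|X^I\tilde f\|_{L^\infty(B(e,1))} = s^{2+|I|}\|X^If\|_{L^\infty(B(p_0,s))}$ and $[X^I\tilde f]_{\Gamma^{0,\alpha}(B(e,1))} = s^{2+|I|+\alpha}[X^If]_{\Gamma^{0,\alpha}(B(p_0,s))}$, and likewise $\|X^I\tilde a_{ij}\|$ and $[X^I\tilde a_{ij}]_{\Gamma^{0,\alpha}}$ are controlled by the corresponding seminorms of $a_{ij}$ on $B(p_0,s)$ (here I would note that for $s\le 1$, or after absorbing an $s$-independent factor, the constant $[\tilde a_{ij}]_{\Gamma^{k-2,\alpha}(B(e,1))}$ can be dominated in terms of $[a_{ij}]_{\Gamma^{k-2,\alpha}(B(p_0,1))}$, which is why the constant $C$ in the statement is allowed to depend on that quantity). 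Apply \eqref{i10} and \eqref{i100} to $v$ on $B(e,1)$ to bound $\sup_{|J|=k}\|X^Jv\|_{L^\infty(B(e,1/2))}$ and $\sup_{|J|=k}[X^Jv]_{\Gamma^{0,\alpha}(B(e,1/2))}$. Finally undo the change of variables: $\|X^Jv\|_{L^\infty(B(e,1/2))} = s^k\|X^Ju\|_{L^\infty(B(p_0,s/2))}$ gives \eqref{rei1} after dividing by $s^k$, and $[X^Jv]_{\Gamma^{0,\alpha}(B(e,1/2))} = s^{k+\alpha}[X^Ju]_{\Gamma^{0,\alpha}(B(p_0,s/2))}$ gives \eqref{rei2} after dividing by $s^{k+\alpha}$. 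Substituting the scaled norms of $\tilde f$ produces exactly the $s$-weighted sums on the right-hand sides.

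\textbf{Main obstacle.} There is no deep obstacle here; the corollary is a scaling bookkeeping exercise. The one point requiring care is the dependence of the constant on the coefficient seminorm: under $\delta_s$ the $\Gamma^{k-2,\alpha}$-seminorm of $\tilde a_{ij}$ does not simply equal that of $a_{ij}$ but is bounded by it (for $s\le 1$) by a combination of lower- and higher-order terms with nonnegative powers of $s$, so one should either state the corollary for $s \le 1$ (or $s \le \operatorname{diam}$) or absorb the resulting harmless constant, and this is implicitly the reason the constant $C$ in \eqref{rei1}--\eqref{rei2} is written as depending on $[a_{ij}]_{\Gamma^{k-2,\alpha}(B(p_0,1))}$ rather than on the seminorm over $B(p_0,s)$. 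Once this is acknowledged, the proof is a direct substitution.
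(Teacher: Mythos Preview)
Your proposal is correct and follows precisely the standard scaling argument the paper has in mind: the authors omit the details entirely, stating only that the proof is analogous to that of \cite[Corollary~3.2]{BGM}, which is exactly this left-translation plus dilation reduction to the unit-scale estimate of Theorem~\ref{intreg}. Your identification of the one bookkeeping subtlety---that the $\Gamma^{k-2,\alpha}$ seminorm of the rescaled coefficients is controlled for $s\le 1$, explaining the stated dependence of $C$ on $[a_{ij}]_{\Gamma^{k-2,\alpha}(B(p_0,1))}$---is apt and in fact more explicit than what the paper records.
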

\begin{proof}
The proof of \eqref{rei1} and \eqref{rei2} is analogous to that of \cite[Corollary 3.2]{BGM} and therefore we skip the details.
\end{proof}
\section{Proof of Theorem \ref{main}}\label{S:main}

We begin with the following Lipschitz regularity result near a portion of the non-characteristic boundary. This constitutes the equicontinuous estimate that is required  in our compactness argument in the proof of Lemma \ref{MCL} below. Henceforth, given a symmetric uniformly elliptic matrix $\mathbb{A}=[a_{ij}]$ satisfying \eqref{ea0} for some $\lambda >0$ and all $p\in\Omega$,
 we use the notation
\begin{equation}\label{L}
\mathscr L\overset{def}{=}  \sum_{i,j=1}^m a_{ij}(p) X_iX_j. 
\end{equation}

\begin{proposition}\label{Lip}
Let $\Om\subset \mathbb{G}$ be a $C^{k,\alpha}$ domain, $k\geq 2$,  such that $p_0  \in \pa \Om$ is a non-characteristic point.   Let $u\in \Gamma^{2}(\Om)\cap C(\overline{\Om})$ be a solution of $\mathscr L u = f$ in $\Omega$, with $u=0$ on $\partial \Omega$.
Furthermore, assume that $a_{ij}$ and $f$ are in $ \Gamma^{\alpha}(\overline{\Om})$ for some $\alpha>0$. Then there exist $r_0$ and $C>0$,  depending on $\Om, \lambda, \alpha$, such that  
 \begin{equation}\label{lipunif}
\underset{\underset{p\neq p'}{p ,p'\in\overline{\Om \cap B_e(p_0,r_0)}}}{\sup} \frac{|u(p)-u(p')|}{d(p,p')} < C ( \|f\|_{L^{\infty}} + \underset{\Om}{\sup}\ |u|).\end{equation}
\end{proposition}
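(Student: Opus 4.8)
The plan is to establish the Lipschitz bound \eqref{lipunif} by a combination of a barrier (comparison) argument near $\partial\Om$ and the interior Lipschitz estimate coming from Corollary \ref{intreg1} (with $k=2$), glued together in the standard Euclidean fashion adapted to the Carnot group metric. Since $p_0$ is a non-characteristic boundary point and $\partial\Om$ is $C^{k,\alpha}$ with $k\ge 2$, after possibly shrinking to a small Riemannian ball $B_e(p_0,r_0)$ we may use the defining function $\vf=\vf_{p_0}$ of \eqref{ome} to build an explicit barrier. The key structural input is $|\nabla_H\vf(p_0)|>0$ (non-characteristic, cf. \eqref{cp3}), which by continuity persists, say $|\nabla_H\vf|\ge c_0>0$, throughout $B_e(p_0,r_0)$ for $r_0$ small.

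\textbf{Step 1: a boundary barrier.} First I would construct, for each $q\in \mathscr S_{r_0}(p_0)$, a supersolution of the form $w(p)= A\,\big(d(p,q) + K\,\vf(p)\big) - \text{(suitable correction)}$, or more simply work with the function $v(p) = A\,d_e(p,q)^{\varepsilon} + B\,(-\vf(p))$-type barriers. The cleaner route in this sub-Riemannian setting is: near a non-characteristic boundary point one can, after a group translation sending $q$ to $e$, compare $\Om$ locally from outside by a \emph{vertical half-space} $H^+_a$ as in Subsection \ref{SS:char} (since $\nabla_H\vf(q)\ne 0$ points transversally). On such a half-space the function $p\mapsto \langle x(p),a\rangle - \lambda$ is $\mathscr L$-harmonic-comparable and comparable to $d(p,\partial H^+_a)$. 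Concretely, I would set $\psi(p) = -\vf(p)$, note $\mathscr L\psi$ is bounded on $B_e(p_0,r_0)$ because $\vf\in C^{k,\alpha}\subset \Gamma^{2}$ there (here one uses $k\ge 2$), and look for a barrier of the form $W = C_1\psi + C_2\psi^2$ or $W = C_1\psi - C_2 d(\cdot,q)^{2}$ chosen so that $\mathscr L W \le -\|f\|_\infty$ in $\V_{r_0}(p_0)$, $W\ge 0$ on $\partial(\Om\cap B_e(p_0,r_0))$, and $W \ge \sup_\Om|u|$ on the interior part $\Om\cap\partial B_e(p_0,r_0)$. Here the positivity of $|\nabla_H\psi|$ on the relevant set is exactly what makes the quadratic term in $\psi$ produce a strictly negative contribution $-2C_2|\nabla_H\psi|^2 \le -2C_2 c_0^2$ to $\mathscr L W$, dominating the bounded terms; this is the place the non-characteristic hypothesis is essential. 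Then Lemma \ref{comp} (weak maximum principle) gives $|u(p)| \le W(p) \le C(\|f\|_\infty + \sup_\Om|u|)\,\psi(p) \le C(\|f\|_\infty + \sup_\Om|u|)\, d_e(p,\partial\Om)$ for $p$ near $p_0$, and since $d_e(p,\partial\Om)$ is controlled (via \eqref{xy}, \eqref{pseudoe}) in terms of the gauge distance to the boundary, one obtains $|u(p)| \le C(\|f\|_\infty+\sup_\Om|u|)\, d(p,q)$ for $q\in\partial\Om$ nearest $p$.

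\textbf{Step 2: interior estimate and gluing.} Given two points $p,p'\in \overline{\Om\cap B_e(p_0,r_0)}$, let $\delta = d(p,\partial\Om)$, say $\delta\le d(p',\partial\Om)$. If $d(p,p')\ge \delta/2$, then by Step 1 $|u(p)-u(p')|\le |u(p)|+|u(p')| \le C(\cdots)(\delta + d(p',\partial\Om)) \le C(\cdots)\big(\delta + \delta + d(p,p')\big)\le C(\cdots)\,d(p,p')$, using $d(p',\partial\Om)\le d(p,\partial\Om)+d(p,p')$. If instead $d(p,p') < \delta/2$, then $p,p'$ both lie in the gauge ball $B(p,\delta/2)\subset\subset\Om$, and I apply the rescaled interior estimate \eqref{rei1} of Corollary \ref{intreg1} with $k=2$ on $B(p, \delta)$ (rescaled to radius $\delta$): this yields $\sup_{|J|=1}\|X^J u\|_{L^\infty(B(p,\delta/2))} \le \frac{C}{\delta}\big(\|u\|_{L^\infty(B(p,\delta))} + \delta^{2}\|f\|_{L^\infty}\big)$. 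But by Step 1, $\|u\|_{L^\infty(B(p,\delta))} \le C(\|f\|_\infty+\sup_\Om|u|)\cdot 2\delta$ (every point of $B(p,\delta)$ is within gauge-distance $2\delta$ of $\partial\Om$), so the horizontal gradient of $u$ is bounded by $C(\|f\|_\infty+\sup_\Om|u|)$ on $B(p,\delta/2)$, and then the sub-Riemannian mean value / segment argument (connecting $p$ to $p'$ by a horizontal path of length $\lesssim d(p,p')$, which is possible since $d(p,p')<\delta/2$ keeps us inside the ball) gives $|u(p)-u(p')| \le C(\|f\|_\infty+\sup_\Om|u|)\,d(p,p')$. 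Combining the two cases proves \eqref{lipunif}.

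\textbf{Main obstacle.} The delicate point is Step 1: constructing a barrier that is simultaneously a strict supersolution for the \emph{variable-coefficient} operator $\mathscr L$ (not just $\Delta_H$) and correctly sized at the boundary, purely in terms of the geometric defining function $\vf$, without flattening the boundary. One must verify that the non-characteristic condition $|\nabla_H\vf(p_0)|>0$ genuinely survives multiplication by the ellipticity matrix — i.e. that $\sum a_{ij}X_i\vf\, X_j\vf \ge \lambda |\nabla_H\vf|^2 \ge \lambda c_0^2 > 0$ on $B_e(p_0,r_0)$ — and that the lower-order error terms $\mathscr L\vf$ and cross terms are bounded, which requires $\vf\in\Gamma^{2}$ and hence the hypothesis $k\ge 2$. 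A secondary technical nuisance is translating comfortably between the Riemannian ball $B_e(p_0,r_0)$ appearing in the statement, the gauge balls $B(p,s)$ in which the interior estimates live, and the distances to $\partial\Om$; this is handled routinely via \eqref{pseudoe} and \eqref{xy}, but care is needed to ensure all constants depend only on $\Om,\lambda,\alpha$ as claimed.
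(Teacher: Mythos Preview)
Your overall architecture---barrier giving linear boundary decay, then interior Lipschitz estimates, then the two-case gluing in Step~2---is exactly the paper's, and your Step~2 is essentially what the paper compresses into the sentence ``combining the boundary decay estimate \eqref{lipu2} with the interior estimates in \cite{CH}.''

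The divergence is in the barrier of Step~1, and there your specific candidates do not work. First a sign slip: for $W=C_1\psi+C_2\psi^2$ with $\psi=-\vf$ and $C_2>0$ one has $\mathscr L(\psi^2)=2\psi\,\mathscr L\psi+2\langle A\nabla_H\psi,\nabla_H\psi\rangle$, so the quadratic term contributes $+2C_2\langle A\nabla_H\psi,\nabla_H\psi\rangle\ge +2C_2\lambda c_0^2$, with the \emph{wrong} sign for a supersolution. More seriously, any barrier that is a function of $\psi=-\vf$ alone vanishes on all of $\partial\Om$, hence is small on the portion of the \emph{lateral} boundary $\Om\cap\partial B_e(p_0,r_1)$ lying close to $\partial\Om$; such a $W$ cannot dominate $u$ (which may be of size $\sup_\Om|u|$) there, and Lemma~\ref{comp} does not apply on $\Om\cap B_e(p_0,r_1)$. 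Your alternative $W=C_1\psi-C_2\,d(\cdot,q)^2$ fails already on $\partial\Om\setminus\{q\}$, where it is negative.

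The paper resolves this by using the exterior Riemannian ball condition (available since $\partial\Om\in C^2$): for each $p\in\partial\Om\cap B_e(p_0,r_1)$ it chooses a tangent exterior ball $B_e(\tilde p,r_1)$, sets $\psi(q)=d_e(q,\tilde p)^2$, and takes
\[
g(q)=(\overline M+\|f\|_\infty)\left(1-\Big(\frac{r_1^2}{\psi(q)}\Big)^{k}\right).
\]
The non-characteristic hypothesis enters exactly where you anticipated---since $\nabla\psi(p)$ is parallel to $\nu(p)$ one gets $|\nabla_H\psi|\ge\delta_0>0$ near $p_0$, and then $\mathscr L g\le k\,r_1^{2k}\psi^{-k-2}(\overline M+\|f\|_\infty)\big(\psi\,\mathscr L\psi-\lambda(k+1)|\nabla_H\psi|^2\big)\le-\|f\|_\infty$ for $k$ large. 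The point of using the exterior center $\tilde p$ rather than $\vf$ is geometric: on $\Om\cap\partial B_e(p,r_1)$ one has $d_e(q,\tilde p)\ge(1+\gamma_0)r_1$ (the tangent ball touches $\partial\Om$ only at $p$), which forces $g\ge\overline M/2$ on the \emph{entire} lateral boundary---precisely what your defining-function barriers cannot achieve. With this barrier in hand the rest of your outline goes through unchanged.
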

\begin{proof}
We let  $M \overset{def}{=}\underset{\Om}{\sup}\ |u|.$ Since $p_0$ is non-characteristic, by using the $C^{2}$ character of $\Om$ we can find a sufficiently small $0<r_1<1$, a local defining function $\vf\in C^2(B_e(p_0,r_1))$ for $\Om$, and a number $\gamma_0>0$ such that
\begin{equation}\label{unf0}
|\nabla_H \vf(p)| \ge \gamma_0
\end{equation}
for every $p\in \partial \Om \cap B_e(p_0,r_1)$. Notice that, in view of \eqref{cp3}, the condition \eqref{unf0} implies that all points $p\in \pa \Om \cap B_e(p_0, r_1) $ are non-characteristic. We can further assume that for every such $p$ there exists a point $\tilde p\in \G\setminus \overline \Om$ such that the Riemannian ball $B_e(\tilde p,r_1)$ be tangent from outside to $\pa \Om$ in $p$. This means that 
\begin{equation}\label{unf1}
\overline{B}_e(\tilde p,r_1) \cap \overline{\Om}= \{p\}.
\end{equation}
Moreover  by choosing a  smaller $\gamma_0$ if needed, we can ensure that  for all $q \in \Om\cap \pa B_e(p,r_1)$ one has
\begin{equation}\label{unf2}
d_e(q,\tilde p) \geq (1+\gamma_0) r_1.
\end{equation}
Given any $p\in \partial \Om \cap B_e(p_0,r_1)$, consider the function $\psi(q) = d_e(q,\tilde p)^2$, where $\tilde p$ is as in \eqref{unf1}. Notice that it must be $\nabla \psi(p) = \alpha(p) \nu(p)$, for some $\alpha(p)\in \R$. Since $\nu(p)$ is also parallel to $\nabla \vf(p)$, we conclude that $\nabla \psi(p) = \beta(p) \nabla \vf(p)$ for some $\beta(p)\in \R$ such that $|\beta(p)| \ge \ve_0>0$. By this observation and \eqref{unf0}, by possibly choosing a smaller $r_1$, we infer the existence of $\delta_0>0$ such that for all $p \in \Om \cap B_e(p_0,r_1)$ one has
\begin{equation}\label{t1}
|\nabla_H \psi(p)| = |\beta(p)| |\nabla_H \phi(p)| \geq \delta_0.
\end{equation}
In order to establish \eqref{lipunif} we only need to show that, given any $p \in \pa \Om \cap B_e(p_0,r_1)$, the following estimate hold for all $q \in \Om\cap  B_e(p,r_1)$ 
\begin{equation}\label{lipu}
|u(q)|\leq K_1 d_e(q,p),
\end{equation}
where $K_1$ depends on $r_1, \gamma_0, \delta_0$  as in \eqref{unf0}, \eqref{t1}. By continuity, it suffices to show \eqref{lipu} for  $p= p_0$.  We denote by $\tilde p_0\in \G\setminus \overline \Om$ the point corresponding to $p_0$ as in \eqref{unf1} above, and we now let $\psi(q) =  d_e(q,\tilde p_0)^2$. We note that $\psi\in C^\infty(\G)$. We next  introduce the function
\begin{equation}\label{funf}
g(q) \overset{def}{=} \left(\overline{M} + ||f||_{\infty} \right) \left(1-\left(\frac{r_1^2}{\psi(q)}\right)^k\right)
\end{equation}
for $q\in \Om\cap B_e(p_0,r_1),$ where $\overline{M},k$ and $L$  will be chosen later. We clearly have 
\begin{equation}\label{fgu1}
g\ge 0 = u\quad \ \ \ \ \ \ \text{on} \  \pa\Om\cap B_e(p_0,r_1).
\end{equation}
If instead $q\in \Om\cap \pa B_e(p_0,r_1)$, then it follows from \eqref{unf2}  that
\[
g(q) \geq \overline{M}\left(1-\left(\frac{1}{(1+\gamma_0)^2}\right)^k\right) \geq \frac{\overline{M}}{2},
\]
provided that $k\ge k_0 \overset{def}{=} \frac{\log 2}{\log(1+\gamma_0)}$. If we thus let $\overline{M}= 2 M$ and $k\ge k_0$, then we conclude that
\begin{equation}\label{fgu2}
g\ge u \quad \ \ \ \ \ \ \ \ \ \ \ \ \text{on}\  \Om\cap \pa B_e(p_0,r_1).
\end{equation}
If we could show that $\Ls g\leq \Ls u$ in $\Om\cap  B_e(p_0,r_1)$, then from (\ref{fgu1}), (\ref{fgu2}) and the comparison principle in Lemma \ref{comp}, we can conclude that 
\begin{equation}\label{bony}
u\leq g \quad \ \ \ \ \ \ \ \ \text{in}\ \Om\cap  B_e(p_0,r_1).
\end{equation}
We now observe that if $h\in C^2(\R)$, $v\in C^2(\G)$, then
\[
\Ls h(v) = h'(v)\Ls v +h''(v)\sum^m_{i,j=1}a_{ij}X_ivX_jv.
\]
Using this formula with $v=\psi$ and $h(t)= \overline M\left(1-(r_1^2/t)^k\right)$, we obtain
\begin{eqnarray*}
\Ls g &=&\left( \overline M + ||f||_{\infty}\right) k\frac{r_1^{2k}}{\psi^{k+2}}\left(\psi \Ls \psi - (k+1)\sum^m_{i,j=1}a_{ij}X_i\psi X_j\psi\right)\\
&\leq &\left( \overline M + ||f||_{\infty}\right) k\frac{r_1^{2k}}{\psi^{k+2}}\left(\psi \Ls \psi - \lambda(k+1)|\nabla_{H}\psi|^2\right).
\end{eqnarray*}
Note that in $\Om\cap  B_e(p_0,r_1)$ we have from \eqref{t1}  
\[
|\nabla_H \psi|^2 \ge \delta_0^2.
\]
Therefore, if $k$ is large enough, one can ensure that in $\Om\cap  B_e(p_0,r_1)$
\[
\Ls g \leq  - ||f||_{\infty}.
\]
We thus have  $\Ls g \leq \Ls u$ in $\Om\cap  B_e(p_0,r_1)$, and thus \eqref{bony} holds. In a similar way, if we take $-g$ as a lower barrier we can ensure that
\begin{equation}\label{bony2}
-g \leq u \quad\ \ \ \ \ \ \ \ \  \text{in} \ \Om\cap  B_e(p_0,r_1).
\end{equation}
Combining \eqref{bony} and \eqref{bony2}, we conclude
\[
|u|\leq g \quad \ \ \ \ \ \ \ \ \  \text{in} \ \Om\cap  B_e(p_0,r_1).
\]
Since $\psi\ge r_1^2$ in $\Om\cap  B_e(p_0,r_1)$, by \eqref{funf} we see that the function $g$ is Lipschitz continuous in $\Om\cap  B_e(p_0,r_1)$. From this fact, and $u(p_0)=g(p_0)=0$, we thus conclude that there exists a constant $K_1>0$ such that \eqref{lipu} holds for $p=p_0$, and therefore for all $p\in \pa \Om\cap  B_e(p_0,r_1)$, with $q \in \Om\cap  B_e(p,r_1)$.

If we now combine \eqref{lipu} with the left-hand side of \eqref{xy}, we can assert that there exists a constant $K_2>0$ such that 
\begin{equation}\label{lipu2}
|u(q)|\leq K_2 d_C(q,p) \quad \text{for all}\ q \in \Om\cap  B_e(p,r_1),\ p\in \pa \Om\cap  B_e(p_0,r_1)
\end{equation}
where $K_2>0$ depends on $r_1, \gamma_0, \delta_0$  as in \eqref{unf0}, \eqref{t1}. Now the desired estimate \eqref{lipunif} follows by combining the boundary decay estimate \eqref{lipu2} with the interior estimates in \cite{CH}.

\end{proof}


\subsection{Proof of Theorem \ref{main}}
Recall that we are assuming that $\Om \in C^{k, \alpha}$, and that, for $s>0$ we have agreed to write $\V_s = \V_s(e)$ and $\mathscr S_s = \mathscr S_s(e)$. After subtracting off the boundary datum $\phi$ and a left-translation, it suffices to  look at the following boundary value problem
\begin{equation}\label{bp}
\mathscr{L} u = f\ \text{in}\ \V_s,\ \ \ \ \ \ \ \ u=0\ \text{on}\ \mathscr S_s,
\end{equation}
where $\mathscr Lu$ is as in \eqref{L},
with $a_{ij}, f \in \Tau^{k-2, \alpha}(\overline{\Om})$.    Then, by scaling with respect to the family of dilations $\{\delta_{\lambda} \}_{\lambda>0}$ in \eqref{dil}, and an appropriate  change of coordinates in the horizontal layer $\g_1$, we may also assume without loss of generality that $s=1$ and:
\begin{equation}\label{smn} 
a_{ij}(e)=\delta_{ij},\ \ \ \ \  ||a_{ij} - \delta_{ij}||_{\Tau^{k-2, \alpha}}, ||f||_{\Tau^{k-2, \alpha}} \leq \delta.\end{equation}
Furthermore  we can also assume that
 $\V_1$ can be expressed in the logarithmic coordinates as 
\begin{equation}\label{omega}
\Big\{(x',x_m,y)\mid x_m>h(x',y)\Big\}, \quad \text{with $h(0,0)=0$,  $\nabla_{x'}h(0,0)=0$},
\end{equation}
where $y=(\xi_2, ..., \xi_r)$, and $h\in C^{k,\alpha}$.
In what follows we will work with the distance $d_e(p,q)$ in $\G$. By this we mean the push forward  of the Euclidean distance on Lie algebra via the exponential map.
To simplify the notation we will write $\mathrm{dist}\,(p,\partial\Omega) = \inf \{d_e(p,q)\mid q\in\partial\Omega\}$, 
instead of $\operatorname{dist}_e(p,\Om)$, and accordingly write $d(p) = \mathrm{dist}\,(p,\partial\Omega), p\in B(1)$, instead of $d_e(p)$. 
In logarithmic coordinates we have
\begin{equation}\label{distres}
d(p) = |\nabla_{H} d(e)| x_m + P_d + d_0(p),
\end{equation}
where $P_d$ is a polynomial of homogeneous degree at least 2 and $d_0 \in C^{k, \alpha}$ with $|d_0(p)|= O(|p|^{k+\alpha})$. Formula \eqref{distres} will play an important role and requires an explanation. The inner unit normal along the graph is given by 
\[
\nu(x',x_m,y) = \left(- \frac{\nabla_{x'} h}{\sqrt{1+|\nabla_{(x',y)} h|^2}}, \frac{1}{\sqrt{1+|\nabla_{(x',y)} h|^2}}, - \frac{\nabla_{y} h}{\sqrt{1+|\nabla_{(x',y)} h|^2}}\right),
\]
where we have let $h = h(x',x_m,y)$. Since by \eqref{omega} we have $\nabla_{x'} h(0,0) =0\in \R^{m-1}$, the previous formula, combined with the fact $\nabla d(0) = \nu(0)$, gives
\[
\nabla d(0) = \left(0, \frac{1}{\sqrt{1+|\nabla_{y} h(0)|^2}}, - \frac{\nabla_{y} h(0)}{\sqrt{1+|\nabla_{y} h(0)|^2}}\right).
\]
In view of \eqref{fdG} this gives
\[
X_1 d(0) = 0,..., X_{m-1} d(0) = 0,\ X_m d(0) = \frac{1}{\sqrt{1+|\nabla_{y} h(0)|^2}},
\]
and therefore
\[
\nabla_H d(0) = \left(0, \frac{1}{\sqrt{1+|\nabla_{y} h(0)|^2}}\right).
\]
Finally, since $d\in C^{k,\alpha}$ (see \cite[Lemma 14.16]{GT}), from Taylor formula we have
\[
d(p) = d(0) + \langle\nabla d(0),p\rangle + O(|p|^2)
= x_m |\nabla_H d(0)| + \langle\nabla_y d(0),y\rangle + \text{higher order terms},
\]
from which \eqref{distres} follows. 
Having said this, for every $0<\sigma\leq 1$ consider  the domain  $\Omega_{\sigma}=  \delta_{\sigma^{-1}}(\Omega)$. We set
\[
\U_\s \overset{def}{=}  \Om_\s \cap B(\s^{-1}),\ \ \ \ \mathscr T_\s\overset{def}{=}  \pa \Om_\s \cap B(\s^{-1}).
\]
In logarithmic coordinates $\Omega_{\sigma}$ is given by
\begin{equation}\label{an1}
\Omega_{\sigma}=\{(x',x_m,y_2,\ldots,y_r)\mid (\sigma x',\sigma x_m,\sigma^2y_2,\ldots,\sigma^ry_r)\in\Omega\}.
\end{equation}
Observe that $\partial\Om_\s$ is given by 
\begin{equation}\label{xms}
x_m=h_{\s}(x',y)=h_{\s}(x',y_2,\ldots,y_r)\overset{def}{=}\frac 1\s h(\s x',\s^{2}y_2,\ldots,\s^{r} y_r).
\end{equation}
With $\Om_\sigma$ as in \eqref{an1} we let $d_{\sigma}(p) = \inf \{d_{e}(p,q)\mid q\in\partial\Omega_\sigma\}$.
From \eqref{omega} and \eqref{xms} it follows that given $\delta>0$, if $\sigma$ is small enough, one can ensure that 
\begin{equation}\label{sm0}
|| h_{\sigma}||_{C^{k, \alpha}(B(r))} \leq \delta,
\end{equation}
which, combined with \eqref{distres}, implies that 
\begin{equation}\label{sm1}
||d_{\sigma}- |\nabla_{H} d_{\sigma}(e)| x_m||_{\Gamma^{k, \alpha}(B(1))} \leq \delta.
\end{equation}
Moreover, keeping in mind that $e \in \partial \Om_\sigma$, and setting $u_{\sigma}(\cdot) = u(\delta_{\sigma} \cdot)$, the boundary value  problem \eqref{bp} becomes
\begin{equation}\label{bp1}
\sum a_{ij} (\delta_{\sigma} p) X_i X_j u_{\sigma}(p) = \sigma^2 f(\delta_{\sigma} p) \ \ \text{in}\ \ \mathscr V(\sigma^{-1}),\ \ \ \ \ \ \ u=0\ \ \text{on}\ \ \mathscr S(\sigma^{-1}).
\end{equation}
Therefore, with the hypothesis \eqref{smn} in place, after substituting $\Om$ with $\Omega_{\sigma}$, $u$ with $u_\sigma$ and so on, we can assume that \eqref{sm0} and \eqref{sm1} hold, where $\delta$ will be chosen later.
Our proof will be divided into the following three steps:
\begin{itemize}
\item[1.] existence of the $k$-th order Taylor polynomial at every boundary point of $\mathscr S_{1/2}$ via compactness arguments;
\item[2.] H\"older continuity of the $k$-th order derivatives at the boundary;
\item[3.] H\"older continuity of the $k$-th order derivatives up to the boundary.
\end{itemize}
Inspired by ideas in \cite{DS1, DS2}, we next introduce the relevant notion of approximating polynomial.
\begin{definition}
We say that a  polynomial $P$ of  degree less than or equal to  $k-1$ is a \emph{$k$-th order approximating polynomial}  for $\Ls u= f$ at $e$ if for all $I$ such that $|I| \leq k-2$ one has
\[
X^I\left(\Ls(d P)\right)(e)=X^I f(e).
\]
\end{definition}
Let $P$ be a given stratified polynomial of the type $\sum_{J} a_J z^J$. Then  one has
\[
\Ls (dP)=P \Ls d+d\Ls P+2\langle A(p) \nabla_H d,\nabla_H P\rangle.
\] 
Since $\Ls x_m = 0$, in view of \eqref{distres} we have
\begin{eqnarray*}
P\Ls d+d\Ls P&=& |\nabla_{H}d(e)|x_m\Ls P+(P_d+d_0)\Ls P+P\Ls(P_d+d_0).
\end{eqnarray*}
By adding and subtracting the identity matrix to $A(p)$, we obtain
\begin{equation}
\label{xmLP}
 |\nabla_{H} d(e)|x_m\Ls P= |\nabla_{H} d(e)|x_m\Delta_{H}P+ |\nabla_{H} d(e)|x_m(\Ls-\Delta_{H}) P,
\end{equation}
and also
\begin{align}
\label{aijXdXP}
2\langle A(p) \nabla_H d,\nabla_H P\rangle & = 2 \langle\nabla_{H}\left(|\nabla_{H} d(e)|x_m+P_d+d_0\right),\nabla_{H}P\rangle
\\
& +2\langle(A(p)-I)\nabla_H d,\nabla_H P\rangle
\notag\\
& = 2|\nabla_{H} d(e)|X_mP + 2 \langle\nabla_{H}\left(P_d+d_0\right),\nabla_{H}P\rangle
\notag\\
& +2\langle(A(p)-I)\nabla_H d,\nabla_H P\rangle,
\notag
\end{align}
where in the last equality we have used that $\nabla_{H}x_m = e_m$. We now recall that, for $j\in\{1,...,r\}$ and $s\in \{1,...,m_j\}$, we denote by $\overline{(j,s)}$ a multi-index which has 1 in the $(j,s)$ position and zeroes everywhere else, with the agreement that, when $j=1$, we indicate $\overline{(1, i)}$ with $\overline{i}$. 
If we then assume that, for a given multi-index $J\in (\mathbb N \cup \{0\})^N$ of the type  $J= (\beta_{1,1},\ldots,\beta_{1,m},\ldots,\beta_{r,1},\ldots,\beta_{r,m_r})$  (see \eqref{I} above)  with $d(J)\leq k-1$, we take $P(z)=z^J$,  then \eqref{xiex2} gives
\begin{eqnarray}
\label{XmP}
X_mP&=& \beta_{1,m}z^{J-\bar{m}}+\sum^r_{j=2}\sum^{m_j}_{s=1}\left(\sum_{0\leq d(I^{(j-1)})= j-1}b^s_{m,I^{(j-1)}}\beta_{j,s}z^{I^{(j-1)}+J-\overline{(j,s)}}\right)
\end{eqnarray}
Using the stratified Taylor's inequality  \eqref{stp1}, as well as \eqref{smn} and  \eqref{sm1}, it follows that we may write
\begin{align}
\label{l.o.t.}
 &|\nabla_{H} d(e)|\left(x_m(\Ls-\Delta_{H}) P+2\sum^m_{i,j}(a_{i,j}-\delta_{ij})X_idX_jP\right)\\
& +  (P_d+d_0)\Ls P+P\Ls(P_d+d_0)+2\langle\nabla_{H}\left(P_d+d_0\right),\nabla_{H}P\rangle\notag \\
= &\sum_{I}c^J_Iz^I+w_J\notag,
\end{align}
where $c_{I}^J= O(\delta)$,   $c_{I}^J \neq 0$ provided $d(J) \leq d(I) \leq k-2$, and $w_J= O(\delta |p|^{k-2+\alpha})$. 
Therefore, using \eqref{xmLP}-\eqref{l.o.t.}, and  computing $\Delta_{H} P$ using the expression in \eqref{subLap}, we find 
\begin{align}\label{ex1}
& \Ls(dP)  = |\nabla_{H} d(e)| \beta_{1,m}(\beta_{1,m} +1) z^{J- \bar{m}}  + |\nabla_{H} d(e)|\sum_{i=1}^{m-1}  \beta_{1, i} (\beta_{1, i} -1) z^{J-2\bar{i} +\bar{m}} \\
  &+ |\nabla_{H} d(e)| \sum^{m}_{i=1}\sum^r_{j=2}\sum^{m_j}_{s=1}\left(\sum_{d(I^{(j-1)})= j-1}\alpha_{1,i} \beta_{j,s} b^s_{i,I^{(j-1)}}z^{I^{(j-1)- \overline{i} +J - \overline{(j,s)} + \overline{m}}}\right)\notag\\& +2|\nabla_{H} d(e)|\sum^r_{j=2}\sum^{m_j}_{s=1}\left(\sum_{ d(I^{(j-1)})= j-1}b^s_{m,I^{(j-1)}}\beta_{j,s}z^{I^{(j-1)}+J-\overline{(j,s)}}\right)\notag\\
& +2|\nabla_{H} d(e)|\sum^{m}_{i=1}\beta_{1,i}\sum^r_{j=2}\sum^{m_j}_{s=1}\left(\sum_{d(I^{(j-1)})= j-1}b^s_{i,I^{(j-1)}}\beta_{j,s}z^{I^{(j-1)}+J-\overline{(j,s)}-\bar{i}+\bar{m}}\right)\notag \\
& +|\nabla_{H} d(e)|\sum^{m}_{i=1}\sum^r_{j,j'=2}\sum^{m_j}_{s,s'=1}\left(\sum_{\substack{d(I^{(j-1)})= j-1\\  d(I^{(j'-1)})= j'-1 }}\beta_{j,s}\beta_{j',s'}b^s_{i,I^{(j-1)}}b^{s'}_{i,I^{(j'-1)}}z^{I^{(j-1)}+I^{(j'-1)}+J-\overline{(j,s)}-\overline{(j',s')}+\bar{m}}\right)\notag\\
&  +|\nabla_{H} d(e)|\sum^{m}_{i=1}\sum^r_{\substack{j,j'=2\\ j'\leq j-1}}\sum^{m_j}_{s,s'=1}\left(\sum_{\substack{d(I^{(j-1)})= j-1\\  d(I^{(j'-1)})= j'-1 }}\alpha_{j',s'}\beta_{j,s}b^s_{i,I^{(j-1)}}b^{s'}_{i,I^{(j'-1)}}z^{I^{(j-1)}-\overline{(j',s')}+I^{(j'-1)}+J-\overline{(j,s)}+\bar{m}}\right)\notag\\
& +\sum_{I} c_{I}^J z^{I}  +w_J(p)\notag.
\end{align}
Moreover, in view of \eqref{smn}, \eqref{sm0} and \eqref{sm1}, we have that $|c_{I}^J|, ||w_J||_{\Gamma^{k-2+\alpha}} \leq C\delta$. In general, if $P$ is of the form 
\[
P(z)=\sum_{0\leq d(J)\leq k-1}b_Jz^J,
\]
then  from \eqref{ex1} it follows that
\begin{equation}\label{apro}
\Ls (dP) = R(z) + w,
\end{equation}
with $w$ such that  
\begin{equation}\label{w}
||w||_{\Gamma^{k-2+\alpha}} \leq C\delta,\ \ \  |w(p)| = O(\delta|p|^{k-2+\alpha}),
\end{equation}
and 
\begin{equation}\label{R}
R= \sum_{\{J\mid d(J) \leq k-2\}}  e_J z^J,
\end{equation}
where the coefficients $e_J$ satisfy the following algebraic condition
\begin{align}\label{alg}
 e_J =& |\nabla_{H} d(e)| (\beta_{1,m} + 1) (\beta_{1,m}+2) b_{J+\bar{m}}  +|\nabla_{H} d(e)|\sum_{i \neq m}  (\beta_{1,i} + 1) (\beta_{1,i}+2) b_{J+2\bar{i} - \bar{m}}\\
  &+ |\nabla_{H} d(e)| \sum^{m}_{i=1}\sum^r_{j=2}\sum^{m_j}_{s=1}\left(\sum_{d(I^{(j-1)})= j-1}a_1(J, i, j, s) b_{J- I^{(j-1)} +\overline{(j,s)} + \bar{i} - \bar{m}}\right)\notag\\ 
 &+2|\nabla_{H} d(e)|\sum^r_{j=2}\sum^{m_j}_{s=1}\left(\sum_{ \substack{d(I^{(j-1)})= j-1}} a_2 (J,m, i, j, s) b_{J +\overline{(j,s)}-I^{(j-1)}}\right)\notag \\
 & +2|\nabla_{H} d(e)|\sum^{m}_{i=1}\sum^r_{j=2}\sum^{m_j}_{s=1}\left(\sum_{\substack{d(I^{(j-1)})= j-1}} a_3(i, J,s)b_{J+\overline{(j,s)}+\bar{i}-\bar{m}-I^{(j-1)}}\right)\notag \\
 &+|\nabla_{H} d(e)|\sum^{m}_{i=1}\sum^r_{j,j'=2}\sum^{m_j}_{s,s'=1}\left(\sum_{\substack{d(I^{(j-1)})= j-1\\ d(I^{(j'-1)})= j'-1}} a_4(J,j,s,j',s',i)b_{J+\overline{(j,s)}+\overline{(j',s')}-\bar{m}-I^{(j-1)}-I^{(j'-1)}}\right)\notag\\
 &+|\nabla_{H} d(e)|\sum^{m}_{i=1}\sum^r_{\substack{j,j'=2\\ j'\leq j-1}}\sum^{m_j}_{s,s'=1}\left(\sum_{\substack{d(I^{(j-1)})= j-1\\  d(I^{(j'-1)})= j'-1}}a_5(J,j,s,j',s',i)b_{J+\overline{(j,s)}+\overline{(j',s')}-\bar{m}-I^{(j-1)}-I^{(j'-1)}}\right)\notag\\
& +\sum_I c_{J}^I b_I. \notag
\end{align}
We now introduce the following.
\newline
\textbf{\underline{Algorithm}:}\label{A}
Given a stratified  polynomial $R= \sum_{J: d(J) \leq k-2}  e_J z^J$,  our objective is finding a polynomial of degree at most $k-1$, i.e., a polynomial of the form $P= \sum_{J: d(J) \leq k-1}  b_J z^J$, such that \eqref{alg} holds. 
This will be possible if, given constants $e_J$, we can solve the linear system \eqref{alg} above for the unknowns $b_I$. Notice that \eqref{alg} is an equation where $b_{J+\bar{m}}$ is a linear combination  of  $b_I$, such that either 
\begin{itemize} 
\item $d(I)<d(J)+1$ or 
\item$d(I)=d(J)+1$, then $\beta^I_{1,m} < \beta^J_{1,m}+1$. 
\end{itemize}
Since this is always the case, it follows that we can solve \eqref{alg} by arbitrarily assigning all the coefficients $b_I$ when $I$  is a multi-index having $ \beta^I_{1,m}= 0$.

We next proceed with the proof of \emph{Step 1}.

\textbf{Step 1:}
In this step, we show the existence of a $k-$th order Taylor polynomial at every boundary point.
The next compactness lemma is crucial for the proof of $\Gamma^{k,\alpha}$ decay estimate at the boundary. 
\begin{lemma}[Compactness Lemma]
\label{MCL}
Let $u\in \Gamma^2(\V_1)\cap C(\overline{\V_1})$ be a solution of \eqref{dp0}. Assume that for some small $0<r\leq 1$ there exists an approximating polynomial $P= \sum_{d(I) \leq k-1} b_I z^I$, with  $|b_I|\leq  1$, such that 
\begin{equation}
\label{udP1}
\|u-dP\|_{L^{\infty}(\Omega\cap B(r))}\leq r^{k+\alpha}.
\end{equation}
Then, if $\delta>0$ is small enough in \eqref{sm0}, there exists a universal $0<\rho<1$ and $\widetilde{P}$ approximating  polynomial of degree at most  $k-1$ such that
\[
\|u-d\widetilde{P}\|_{L^{\infty}(\Omega\cap B(\rho r))}\leq (\rho r)^{k+\alpha},
\]
and
\[
\|P-\widetilde{P}\|_{L^{\infty}(\Omega\cap B( r))}\leq Cr^{k-1+\alpha}.
\]
\end{lemma}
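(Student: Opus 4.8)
I would prove the Compactness Lemma by a contradiction–compactness scheme in the spirit of Caffarelli. Suppose the conclusion fails. Then there exist $\delta_n\to 0$ in \eqref{sm0}, solutions $u_n$ on domains $\Omega_n$ (with operators $\mathscr{L}_n=\sum a^{(n)}_{ij}X_iX_j$, right-hand sides $f_n$, boundary-distances $d_n$), approximating polynomials $P_n=\sum_{d(I)\le k-1}b^{(n)}_Iz^I$ with $|b^{(n)}_I|\le 1$, and scales $r_n\in(0,1]$ satisfying \eqref{udP1}, for which no approximating polynomial of degree $\le k-1$ fulfils the two stated estimates with the universal $\rho\in(0,1)$ and $C>0$ that the argument will select. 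Rescale by
\[
w_n(p):=\frac{(u_n-d_nP_n)(\delta_{r_n}p)}{r_n^{k+\alpha}},\qquad p\in\Omega_n^{(r_n)}\cap B(1),
\]
where $\Omega_n^{(r_n)}=\delta_{r_n^{-1}}(\Omega_n)$. By \eqref{udP1} one has $||w_n||_{L^\infty}\le 1$, and $w_n=0$ on $\partial\Omega_n^{(r_n)}\cap B(1)$ since $u_n$ and $d_n$ vanish there. Because $P_n$ is an approximating polynomial, the expansion \eqref{apro}--\eqref{w} together with the stratified Taylor inequality (Theorem \ref{stp}) applied to $f_n$ gives $\mathscr{L}_n(u_n-d_nP_n)=O(\delta_n|p|^{k-2+\alpha})$, so after rescaling $w_n$ solves
\[
\sum_{i,j}a^{(n)}_{ij}(\delta_{r_n}p)\,X_iX_jw_n=g_n\ \text{ in }\ \Omega_n^{(r_n)}\cap B(1),\qquad ||g_n||_{L^\infty(B(1))}=O(\delta_n).
\]

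Next I pass to the limit. The hypotheses of Proposition \ref{Lip} hold for each $w_n$, and its constant is uniform because by \eqref{sm0} the $\Omega_n^{(r_n)}$ are graphs of uniformly small $C^{k,\alpha}$-norm; combined with the interior estimates of Theorem \ref{intreg} (which give $C^2$ interior bounds, $k\ge 2$), the family $\{w_n\}$ is equicontinuous on $\overline{B(1/2)}$ and bounded in $C^2_{\mathrm{loc}}$ inside. By Arzel\`a--Ascoli, along a subsequence $w_n\to w_\infty$ locally uniformly, and in $C^2_{\mathrm{loc}}$ in the interior. Since $\delta_n\to 0$ we have $a^{(n)}_{ij}(\delta_{r_n}\cdot)\to\delta_{ij}$, $g_n\to 0$, $\gamma_n:=|\nabla_Hd_n(e)|\to 1$, and by \eqref{sm0}--\eqref{sm1} the domains $\Omega_n^{(r_n)}$ and their distance functions converge to the half-space $\{x_m>0\}$ and to $x_m$. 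Passing to the limit in the equation, $w_\infty$ solves $\Delta_Hw_\infty=0$ in $\{x_m>0\}\cap B(1/2)$, with $w_\infty=0$ on $\{x_m=0\}\cap B(1/2)$ and $||w_\infty||_{L^\infty}\le 1$. By hypoellipticity $w_\infty$ is smooth inside, and by the Kohn--Nirenberg boundary regularity (Theorem \ref{KN}, applied on the fixed half-space, so the constant $C^\star$ is universal) $w_\infty\in C^{k+1}$ up to $\{x_m=0\}$ near $e$, with $||w_\infty||_{C^{k+1}}\le C^\star$.

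Let $T$ be the $k$-th stratified Taylor polynomial of $w_\infty$ at $e$; by Theorem \ref{stp} its coefficients are $\le CC^\star$ and $||w_\infty-T||_{L^\infty(B(\rho))}\le C^\star\rho^{k+1}$. Since $w_\infty\equiv 0$ on $\{x_m=0\}$, this bound forces the polynomial $T|_{\{x_m=0\}}$, of weighted degree $\le k$, to vanish identically; hence $T=x_mQ_\infty$ with $Q_\infty$ of weighted degree $\le k-1$ and bounded coefficients, and $\Delta_H(x_mQ_\infty)=\Delta_HT=0$, i.e.\ $Q_\infty$ is an approximating polynomial for the model problem $\Delta_Hv=0$. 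The decisive and most delicate step is to upgrade $Q_\infty$ to a genuine approximating polynomial for $\mathscr{L}_nu_n=f_n$ that, at scale $r_n$, is $L^\infty$-close to the rescaled model correction $Q^{(n)}_\infty:=r_n^{k-1+\alpha}\,Q_\infty\circ\delta_{r_n^{-1}}$. Because $\mathscr{L}_n$ and $d_n$ are $O(\delta_n)$-perturbations of $\Delta_H$ and $x_m$ (in $\Tau^{k-2,\alpha}$ and $\Tau^{k,\alpha}$, by \eqref{smn} and \eqref{sm1}), the linear map carrying a polynomial $P$ to the polynomial $R$ in \eqref{apro}--\eqref{alg} for $(\mathscr{L}_n,d_n)$ differs by $O(\delta_n)$ from the one for $(\Delta_H,x_m)$, whose kernel contains $Q^{(n)}_\infty$; hence $R[Q^{(n)}_\infty]=O(\delta_n)$, with its weighted-degree-$d$ coefficient of size $O(\delta_nr_n^{k-2+\alpha-d})$. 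By the solvability furnished by the Algorithm -- which shows this map is onto, with the correction coefficients solved for recursively and the one of weighted degree $d$ controlled at scale $r_n^{k-1+\alpha-d}$ -- there is a correction $E_n$ of degree $\le k-1$ with $R[E_n]=-R[Q^{(n)}_\infty]$ and $||E_n||_{L^\infty(B(r_n))}=O(\delta_nr_n^{k-1+\alpha})$. Setting $\widetilde P_n:=P_n+Q^{(n)}_\infty+E_n$, linearity of the map and the fact that $P_n$ is approximating make $\widetilde P_n$ an approximating polynomial of degree $\le k-1$.

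Finally, undoing the rescaling -- and using the structural expansion \eqref{distres} of $d_n$, which is essential since $d_n$ is not $\delta_\lambda$-homogeneous -- one gets on $B(\rho r_n)$
\[
(u_n-d_n\widetilde P_n)(\delta_{r_n}p)=r_n^{k+\alpha}\big(w_n(p)-(\gamma_nx_m+\eta_n)Q_\infty(p)\big)-d_nE_n(\delta_{r_n}p),
\]
with $||\eta_n||_{L^\infty(B(\rho))}=O(\delta_n)$; since $w_n\to w_\infty$, $\gamma_n\to 1$, and $d_nE_n(\delta_{r_n}\cdot)=O(\delta_nr_n^{k+\alpha})$, this yields $||u_n-d_n\widetilde P_n||_{L^\infty(\Omega_n\cap B(\rho r_n))}\le r_n^{k+\alpha}\big(C^\star\rho^{k+1}+o(1)\big)$, while directly $||P_n-\widetilde P_n||_{L^\infty(\Omega_n\cap B(r_n))}\le Cr_n^{k-1+\alpha}$ for a universal $C$. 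Choosing $\rho$ so small that $C^\star\rho^{k+1}\le\tfrac14\rho^{k+\alpha}$ (possible because $\alpha<1$) and then $n$ large, both inequalities hold, contradicting the assumed failure. The main obstacle, as indicated, is this last algebraic step: producing an approximating polynomial with the correct scale-invariant decay forces one to exploit the stratified structure of $\g$ through the explicit expansions \eqref{subLap}--\eqref{alg} and the Algorithm, while carefully tracking the non-homogeneity of the Euclidean distance via \eqref{distres}.
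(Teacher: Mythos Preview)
Your proposal is correct and follows essentially the same approach as the paper: rescale $u-dP$ to the unit ball, use Proposition~\ref{Lip} for equicontinuity and compactness to pass to a harmonic limit on the half-space $\{x_m>0\}$, invoke Kohn--Nirenberg (Theorem~\ref{KN}) to extract the Taylor polynomial $x_mQ_\infty$, and then use the Algorithm to perturb $Q_\infty$ by $O(\delta)$ into a genuine approximating polynomial. The only cosmetic difference is that you frame the whole argument as an explicit contradiction with sequences $\delta_n\to0$, whereas the paper phrases the compactness step in the equivalent ``$\forall\varepsilon\,\exists\delta$'' form (citing \cite{BGM}) and uses a reflection-type extension across $\{x_m=0\}$ before taking the limit; neither change is substantive.
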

\begin{proof} Let 
\begin{equation}
v(p)\overset{def}{=} \frac{(u-(dP))\circ(\delta_{r}(p))}{r^{k+\alpha}},\quad p\in \widetilde{\Omega}\cap B(1),
\end{equation}
where $\widetilde{\Omega}=\Omega_{r}$. We immediately observe that the validity of \eqref{udP1} implies  that
\begin{equation}
\label{vbdd1}
\|v\|_{L^{\infty}(\widetilde{\Omega}\cap B(1))}\leq 1.
\end{equation}
Furthermore, we obtain
\begin{equation}\label{newl}
\begin{cases}
\tilde \Ls v \overset{def}= \sum a_{ij} (\delta_r \cdot ) X_i X_j v= r^{-k+2-\alpha}\left(f(\delta_r(p))-\Ls(dP)(\delta_r(p))\right) \quad &\text{in $\widetilde{\Omega}\cap B(1)$},\\
v=0\quad &\text{on $\partial\widetilde{\Omega}\cap B(1)$}.
\end{cases}
\end{equation}
Since $f\in \Gamma^{k-2,\alpha}$, we have that
\begin{equation}
\label{Tayf}
|f(p)-P^f_e(p)|\leq C\delta d(p,e)^{k-2+\alpha},
\end{equation}
where $P^f_e$ is the Taylor  polynomial of $f$ at $e$  of homogeneous degree at most $k-2$. We note that \eqref{Tayf} is a consequence of the stratified Taylor inequality \eqref{stp1} combined with the reduction in \eqref{smn} that guarantees $||f||_{\Tau^{k-2, \alpha}}\le \delta$. Recall that $P$ is an approximating polynomial with respect to $\Ls$  and $f$, hence in \eqref{apro} we must have  
\begin{equation}\label{PfR}
P^f_e=R.
\end{equation}
To clarify \eqref{PfR}, we note first that from \eqref{apro} it follows that $R$ is the $(k-2)$-th order Taylor polynomial  of $\Ls (dP)$ at $e$ because the remainder $w$ in \eqref{apro} is $O(\delta|p|^{k-2+\alpha})$, see \eqref{w}. Furthermore,  by the definition of our approximating polynomial, we have that $X^I (\Ls (dP)(e)= X^{I} f(e)$ for all $|I| \leq k-2$, and thus (by the definition of the Taylor polynomial) both $f$ and $\Ls (dP)$ have the same Taylor polynomial of order $k-2$ at $e$. Noting that $P^f_e$ is the Taylor polynomial of $f$ at $e$ of order $(k-2)$, this ensures that \eqref{PfR} hold. Continuing further,  if we combine \eqref{apro} (in which $R$ is now replaced by $P^f_e$) with \eqref{Tayf}, we find that that  term in the right-hand side  of \eqref{newl} can estimated as follows
\begin{align*}
& \ \ \  \left| r^{-k+2-\alpha}\left(f(\delta_r(p))-\Ls(dP)(\delta_r(p))\right) \right|
\\ 
& = \left| r^{-k+2-\alpha}\left(f(\delta_r(p))- P^f_e(\delta_r(p))- w(\delta_r(p))\right) \right|
\\
& \leq r^{-k+2-\alpha} \left( |f(\delta_r(p))- P^f_e(\delta_r(p))| + |w(\delta_r(p))| \right)\leq C \delta,
\end{align*}
where in the last inequality we have used \eqref{Tayf} and \eqref{w}.
This ensures the validity of the following estimate
\begin{equation}
\label{bdLv}
|\tilde \Ls v|\leq C\delta. 
\end{equation}
If $\delta>0$ is sufficiently small, Proposition \ref{Lip} implies the existence of a constant $C > 0$, depending exclusively on $\lambda$ but not on $\delta$,  such that
\[
\|v\|_{\Gamma^{0,1}(\widetilde{\Omega}\cap B(4/5)))}\leq C.
\]
Similarly to what we did in \cite{BGM}, in the logarithmic coordinates we now extend  $v$ to a function $V: B(4/5)\to \R$  such that $V\equiv v$ in $\widetilde{\Omega}\cap B(4/5)$ (we use the classical procedure in e.g. \cite[p. 14]{LM}):
\begin{equation}\label{ext}
V(x',\tilde{x}_m,y)=
\begin{cases}
v(x',\tilde{x}_m,y)\ \ \ \ \ \ \ \ \ \ \ \ \ \ \ \ \tilde x_m \overset{def}= x_m - h(x', y) \ge 0,
\\
\sum^{k+2}_{i=1}c_iv\left(x',-\frac{\tilde{x}_m}{i},y\right)\ \quad \tilde{x}_m<0,
\end{cases}
\end{equation}
where the constants $c_i$'s are determined by the system of equations
\begin{equation}
\label{constants}
\sum^{k+2}_{i=1}c_i(-1/i)^m=1,\quad m=0,1,2,..., k, k+1.
\end{equation}
It follows that
\begin{equation}\label{ext}
\|V\|_{\Tau^{{0,1}}(B(4/5))}\leq C'\ \|v\|_{\Tau^{{0,1}}(\widetilde{\Omega}\cap B(4/5)))}\leq C_1,
\end{equation}
for some $C_1 = C_1(\lambda,\alpha)>0$. 
By a compactness argument analogous to that in the proof of \cite[Lemma 4.1]{BGM}, using  the equicontinuous Lipschitz estimate in Proposition \ref{Lip}, we can assert that  given $\varepsilon>0$ there exists a sufficiently small $\delta>0$  in \eqref{sm0}, for which there is a $v_0$ such that
\begin{equation}\label{lim}
\begin{cases}
\Delta_{H}v_0=0 \quad &\text{in  $B(4/5)\cap \{x_m>0\}$},\\
v_0=0\quad &\text{on $ B(4/5)\cap \{x_m=0\}$},
\end{cases}
\end{equation}
with 
\[
\|v-v_0\|_{L^{\infty}(\widetilde{\Omega}\cap B(1/2))}\leq \varepsilon.
\]
Said differently, when $\Om \cap B(1)$ (and consequently $\tilde \Om \cap B(1)$) is flat enough, we can approximate $v$ by a solution to a ``flat" boundary value problem.
Moreover, it follows from \eqref{ext} that  in the region $\{x_m <0\}$,  $v_0$  satisfies
\begin{equation}\label{v0}
v_0(x', x_m, y) = \sum_{i=1}^{k+2} c_i v\left(x', -\frac{x_m}{i}, y\right),
\end{equation}
where $c_i$'s satisfy \eqref{constants}. 
From the   boundary regularity in Theorem \ref{KN}, and the fact that \eqref{v0} determines a $C^{k+1}$ extension across $\{x_m=0\}$, we find
\[
\|v_0\|_{C^{k+1}(B(1/2))}\leq C.
\]
Therefore, there exists a   polynomial  $Q$, with homogeneous degree less than or equal to $k-1$, such that 
\begin{equation}
\label{Polv0}
\|v_0-x_mQ\|_{L^{\infty}(B(\rho))}\leq C_0 \rho^{k+1},\quad \text{with $0<\rho<\frac{1}{2}$}.
\end{equation}
Given $0<\alpha<1$, we now fix $\rho>0$ such that 
\begin{equation}
\label{rhofix}
C_0\rho^{k+1}=\frac{\rho^{k+\alpha}}{4}.
\end{equation}
Then, we take 
\begin{equation}
\label{epsilonfix}
\varepsilon\overset{def}{=}\frac{\rho^{k+\alpha}}{16},
\end{equation}
which in turn decides the choice of $\delta$.
We may thus conclude that 
\begin{equation}\label{fr1}
\|v-x_mQ\|_{L^{\infty}(\widetilde{\Omega}\cap B(\rho))}\leq \left(\frac{1}{4}+\frac{1}{16}\right)\rho^{k+\alpha}.
\end{equation}
Since
$$
\tilde d (\cdot) \overset{def}= \frac{d(\delta_r \cdot)}{r |\nabla_{H} d(e)|} = x_m + O(\delta),
$$
for a possibly smaller $\delta$, we obtain from \eqref{fr1} that the following holds
\begin{equation}\label{ki}
\|v- \tilde d Q\|_{L^{\infty}(\widetilde{\Omega}\cap B(\rho))}\leq \left(\frac{3}{4} \right)\rho^{k+\alpha}.\end{equation}
Rescaling back to $u$, we find 
 \begin{align}
 \label{udPQ}
\Bigg\|u-d\left(P+\frac{r^{k-1+\alpha}}{|\nabla_{H}d(e)|}Q\circ \delta_{r^{-1}}\right)\Bigg\|_{L^{\infty}(\Omega\cap B(\rho r))}\leq\left(\frac{3}{4}\right)(r\rho)^{k+\alpha}. 
\end{align}
We emphasise here that 
\[
P+\frac{r^{k-1+\alpha}}{|\nabla_{H}d(e)|}Q\circ \delta_{r^{-1}}
\]
 might not be an approximating polynomial for $u$. Hence,  we need to further modify $Q$ to some other polynomial $\widetilde{Q}$, without essentially
modifying the estimate \eqref{udPQ}.
With this in mind, we write 
\[
Q=\sum_{0\leq d(I)\leq k-1}b_Iz^I\quad \text{and}\quad \widetilde{Q}=\sum_{0\leq d(I)\leq k-1}\widetilde{b}_Iz^I.
\]
In view of \eqref{alg}, after a computation, we find that for $P+ r^{k-1+\alpha} \tilde Q \circ \delta_{r^{-1}}$ to be approximating,   the coefficients $\widetilde{b}_I$ have to satisfy the following algebraic conditions:

\begin{align}\label{alg1}
 &0=|\nabla_{H} d(e)| (\beta_{1,m} + 1) (\beta_{1,m}+2) \widetilde{b}_{J+\bar{m}}  +|\nabla_{H} d(e)| \sum_{i \neq m}  (\beta_{1,i} + 1) (\beta_{1,i}+2) \widetilde{b}_{J+2\bar{i} - \bar{m}}\\
  &+ |\nabla_{H} d(e)| \sum^{m}_{i=1}\sum^r_{j=2}\sum^{m_j}_{s=1}\left(\sum_{d(I^{(j-1)})= j-1}a_1(J, i, j, s) \widetilde{b}_{J- I^{(j-1)} +\overline{(j,s)} + \bar{i} - \bar{m}}\right)\notag\\ 
 &+2|\nabla_{H} d(e)|\sum^r_{j=2}\sum^{m_j}_{s=1}\left(\sum_{ \substack{d(I^{(j-1)})= j-1}} a_2 (J,m, i, j, s) \widetilde{b}_{J +\overline{(j,s)}-I^{(j-1)}}\right)\notag \\
 & +2|\nabla_{H} d(e)|\sum^{m}_{i=1}\sum^r_{j=2}\sum^{m_j}_{s=1}\left(\sum_{\substack{d(I^{(j-1)})= j-1}} a_3(i, J,s)\widetilde{b}_{J+\overline{(j,s)}+\bar{i}-\bar{m}-I^{(j-1)}}\right)\notag \\
 &+|\nabla_{H} d(e)|\sum^{m}_{i=1}\sum^r_{j,j'=2}\sum^{m_j}_{s,s'=1}\left(\sum_{\substack{d(I^{(j-1)})= j-1\\ d(I^{(j'-1)})= j'-1}} a_4(J,j,s,j',s',i)\widetilde{b}_{J+\overline{(j,s)}+\overline{(j',s')}-\bar{m}-I^{(j-1)}-I^{(j'-1)}}\right)\notag\\
 &+|\nabla_{H} d(e)|\sum^{m}_{i=1}\sum^r_{\substack{j,j'=2\\ j'\leq j-1}}\sum^{m_j}_{s,s'=1}\left(\sum_{\substack{d(I^{(j-1)})= j-1\\  d(I^{(j'-1)})= j'-1}}a_5(J,j,s,j',s',i)\widetilde{b}_{J+\overline{(j,s)}+\overline{(j',s')}-\bar{m}-I^{(j-1)}-I^{(j'-1)}}\right)\notag\\
& +\sum_I \widetilde{c}_{J}^I \widetilde{b}_I,\notag
\end{align}

where \begin{equation}\label{smo0} \widetilde{c}_{J}^I= r^{d(J) +1 -d(I)} c_{J}^I.\end{equation}

Moreover, since $c_{J}^I \neq 0$ provided that $d(I) \leq d(J)$ and $r \leq 1$, it follows  from \eqref{smo0} that
\begin{equation}\label{smo}
|\widetilde{c}_{J}^I| \leq C \delta.
\end{equation}
Since $x_mQ$ is $\Delta_{H}-$harmonic, we find that the  coefficients $b_I$ must satisfy
\begin{align}\label{alg2}
 &0= (\beta_{1,m} + 1) (\beta_{1,m}+2) b_{J+\bar{m}}  +\sum_{i \neq m}  (\beta_{1,i} + 1) (\beta_{1,i}+2) b_{J+2\bar{i} - \bar{m}}\\
  &+  \sum^{m}_{i=1}\sum^r_{j=2}\sum^{m_j}_{s=1}\left(\sum_{d(I^{(j-1)})= j-1}a_1(J, i, j, s) b_{J- I^{(j-1)} +\overline{(j,s)} + \bar{i} - \bar{m}}\right)\notag\\ 
 &+2\sum^r_{j=2}\sum^{m_j}_{s=1}\left(\sum_{ \substack{d(I^{(j-1)})= j-1}} a_2 (J,m, i, j, s) b_{J +\overline{(j,s)}-I^{(j-1)}}\right)\notag \\
 & +2\sum^{m}_{i=1}\sum^r_{j=2}\sum^{m_j}_{s=1}\left(\sum_{\substack{d(I^{(j-1)})= j-1}} a_3(i, J,s) b_{J+\overline{(j,s)}+\bar{i}-\bar{m}-I^{(j-1)}}\right)\notag \\
 &+\sum^{m}_{i=1}\sum^r_{j,j'=2}\sum^{m_j}_{s,s'=1}\left(\sum_{\substack{d(I^{(j-1)})= j-1\\ d(I^{(j'-1)})= j'-1}} a_4(J,j,s,j',s',i)b_{J+\overline{(j,s)}+\overline{(j',s')}-\bar{m}-I^{(j-1)}-I^{(j'-1)}}\right)\notag\\
 &+\sum^{m}_{i=1}\sum^r_{\substack{j,j'=2\\ j'\leq j-1}}\sum^{m_j}_{s,s'=1}\left(\sum_{\substack{d(I^{(j-1)})= j-1\\  d(I^{(j'-1)})= j'-1}}a_5(J,j,s,j',s',i)b_{J+\overline{(j,s)}+\overline{(j',s')}-\bar{m}-I^{(j-1)}-I^{(j'-1)}}\right).\notag
\end{align}
From \eqref{alg1} and \eqref{alg2} we find that $d_J = \widetilde{b}_J - \frac{b_J}{|\nabla_{H} d(e) |}$ should satisfy the following  system 
\begin{align}\label{alg4}
 &-\sum_I  \widetilde{c}_{J}^I \frac{b_I}{|\nabla_{H} d(e)|}=|\nabla_{H} d(e)| (\beta_{1,m} + 1) (\beta_{1,m}+2) d_{J+\bar{m}}  +|\nabla_{H} d(e)| \sum_{i \neq m}  (\beta_{1,i} + 1) (\beta_{1,i}+2) d_{J+2\bar{i} - \bar{m}}\\
  &+ |\nabla_{H} d(e)| \sum^{m}_{i=1}\sum^r_{j=2}\sum^{m_j}_{s=1}\left(\sum_{d(I^{(j-1)})= j-1}a_1(J, i, j, s) d_{J- I^{(j-1)} +\overline{(j,s)} + \bar{i} - \bar{m}}\right)\notag\\ 
 &+2|\nabla_{H} d(e)|\sum^r_{j=2}\sum^{m_j}_{s=1}\left(\sum_{ \substack{d(I^{(j-1)})= j-1}} a_2 (J,m, i, j, s) d_{J +\overline{(j,s)}-I^{(j-1)}}\right)\notag \\
 & +2|\nabla_{H} d(e)|\sum^{m}_{i=1}\sum^r_{j=2}\sum^{m_j}_{s=1}\left(\sum_{\substack{d(I^{(j-1)})= j-1}} a_3(i, J,s)d_{J+\overline{(j,s)}+\bar{i}-\bar{m}-I^{(j-1)}}\right)\notag \\
 &+|\nabla_{H} d(e)|\sum^{m}_{i=1}\sum^r_{j,j'=2}\sum^{m_j}_{s,s'=1}\left(\sum_{\substack{d(I^{(j-1)})= j-1\\ d(I^{(j'-1)})= j'-1}} a_4(J,j,s,j',s',i)d_{J+\overline{(j,s)}+\overline{(j',s')}-\bar{m}-I^{(j-1)}-I^{(j'-1)}}\right)\notag\\
 &+|\nabla_{H} d(e)|\sum^{m}_{i=1}\sum^r_{\substack{j,j'=2\\ j'\leq j-1}}\sum^{m_j}_{s,s'=1}\left(\sum_{\substack{d(I^{(j-1)})= j-1\\  d(I^{(j'-1)})= j'-1}}a_5(J,j,s,j',s',i)d_{J+\overline{(j,s)}+\overline{(j',s')}-\bar{m}-I^{(j-1)}-I^{(j'-1)}}\right)\notag\\
& +\sum_I \widetilde{c}_{J}^I d_I.\notag
\end{align}
From \eqref{smo} it follows that the left-hand side of the linear system in \eqref{alg4} is bounded by $C\delta$. Therefore, using the above \textbf{Algorithm}, we can determine $d_I$'s such that $|d_I| \leq C\delta$. It thus  follows that for a new constant $C>0$ we have
\[
|| |\nabla_{H} d(e)| \tilde Q - Q||_{L^{\infty}(\tilde \Om \cap B(\rho))} \leq C\delta.
\]
Using this inequality in the estimate \eqref{ki}, we obtain
\begin{equation}\label{ki1}
\|v- \tilde d |\nabla_{H} d(e)|  \tilde Q\|_{L^{\infty}(\widetilde{\Omega}\cap B(\rho))}\leq \left(\frac{3}{4} \right)\rho^{k+\alpha} + C\delta  \leq \rho^{k+\alpha},\end{equation}
provided $\delta$ is chosen smaller if necessary.
Rescaling back to $u$, we obtain 
 \begin{align}
 \label{newu}
\Bigg\|u-d\left(P+r^{k-1+\alpha}\tilde Q\circ \delta_{r^{-1}}\right)\Bigg\|_{L^{\infty}(\Omega\cap B(\rho r))}\leq(r\rho)^{k+\alpha}. 
\end{align}
The conclusion of the lemma thus follows with
\[
\tilde P=P+r^{k-1+\alpha}\tilde Q\circ \delta_{r^{-1}}.\]

\end{proof}

Using Lemma \ref{MCL}, we next show the existence of the $k$-th order Taylor polynomial at $e \in \partial \Om$.

\begin{lemma}\label{btaylor}
Let $u$ be as in Lemma \ref{MCL}. Then there exists a polynomial $P_e$ at $e$ of homogeneous degree less than or equal to $k$  such that for all $p \in \Om \cap B(1/2)$
\begin{equation}\label{bdry}
|u(p)- P_e(p)| \leq C |p|^{k+\alpha}.
\end{equation}
\end{lemma}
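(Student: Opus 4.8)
The plan is to iterate Lemma \ref{MCL} along the geometric sequence of radii $\rho^{j}$, $j\in\mathbb N\cup\{0\}$, to produce a limiting approximating polynomial $P_\infty\in\mathscr{P}_{k-1}$ with $\|u-dP_\infty\|_{L^\infty(\Om\cap B(r))}\le Cr^{k+\alpha}$ for all small $r$, and then to take $P_e$ to be the $k$-th stratified Taylor polynomial (Definition \ref{tp}) of $dP_\infty$ at $e$. For the base case, I first note that, after the reductions preceding the statement — a dilation making $\|a_{ij}-\delta_{ij}\|_{\Tau^{k-2,\alpha}}$, $\|f\|_{\Tau^{k-2,\alpha}}$ and $\|h\|_{C^{k,\alpha}}$ as small as desired, followed by a division of $(u,f,\phi)$ by $\max(1,2\|u\|_{L^\infty(\V_1)})$, which is harmless by the linearity of \eqref{ap} and preserves those smallness conditions — one may assume $\|u\|_{L^\infty(\V_1)}\le 1/2$. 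Feeding the $(k-2)$-th stratified Taylor polynomial $P^f_e$ of $f$ at $e$ (whose coefficients are $O(\delta)$ in view of \eqref{smn} and Definition \ref{tp}) into the \textbf{Algorithm}, and setting the free coefficients — those indexed by multi-indices $I$ with $\beta^I_{1,m}=0$ — equal to $0$, yields an approximating polynomial $P_0=\sum_{d(I)\le k-1}b^{(0)}_I z^I$ with $|b^{(0)}_I|\le C\delta$; hence $\|dP_0\|_{L^\infty(\V_1)}\le C\delta$ and $\|u-dP_0\|_{L^\infty(\V_1)}\le 1/2+C\delta\le 1$, which is exactly the hypothesis of Lemma \ref{MCL} at scale $r=1$.

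Next I would run the iteration. Applying Lemma \ref{MCL} repeatedly produces approximating polynomials $P_j=\sum_{d(I)\le k-1}b^{(j)}_I z^I$ with $P_{j+1}=P_j+\rho^{j(k-1+\alpha)}\widetilde Q_j\circ\delta_{\rho^{-j}}$ and
\[
\|u-dP_j\|_{L^\infty(\Om\cap B(\rho^j))}\le\rho^{j(k+\alpha)},\qquad \|P_{j+1}-P_j\|_{L^\infty(\Om\cap B(\rho^j))}\le C\rho^{j(k-1+\alpha)}.
\]
To convert the second bound into coefficient bounds, observe that by \eqref{omega} and \eqref{sm0} the rescaled domain $\delta_{\rho^{-j}}(\Om)$ is the subgraph of a $\delta$-flat function, so $\delta_{\rho^{-j}}(\Om)\cap B(1)$ contains a gauge ball of a fixed radius independent of $j$; since all norms on the finite-dimensional space $\mathscr{P}_{k-1}$ are equivalent, rescaling $P_{j+1}-P_j$ by $\delta_{\rho^j}$ gives $|b^{(j+1)}_I-b^{(j)}_I|\le C\rho^{j(k-1+\alpha-d(I))}\le C\rho^{j\alpha}$ whenever $d(I)\le k-1$. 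Consequently these increments are summable, the coefficients stay within a fixed bound throughout the iteration (so that Lemma \ref{MCL} keeps applying, upon choosing $\delta$ small depending on that bound), and $P_j\to P_\infty:=\sum b^{(\infty)}_I z^I\in\mathscr{P}_{k-1}$ with $\|P_\infty-P_j\|_{L^\infty(\Om\cap B(\rho^j))}\le C\rho^{j(k-1+\alpha)}$; since $|d(p)|\le C|p|$ on $B(1)$ by \eqref{distres}, this upgrades to $\|dP_\infty-dP_j\|_{L^\infty(\Om\cap B(\rho^j))}\le C\rho^{j(k+\alpha)}$.

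Finally I would assemble the estimate. For $p\in\Om\cap B(1/2)$ with $|p|\le\rho$, choosing $j$ with $\rho^{j+1}<|p|\le\rho^j$ and adding the last two bounds gives $|u(p)-dP_\infty(p)|\le C\rho^{j(k+\alpha)}\le C\rho^{-(k+\alpha)}|p|^{k+\alpha}$, while for $\rho\le|p|\le 1/2$ this is trivial. Since $dP_\infty\in\Gamma^{k,\alpha}$ in a neighborhood of $e$ (by \eqref{distres} and the smoothness of $P_\infty$), taking $P_e$ to be its $k$-th stratified Taylor polynomial at $e$ — a polynomial of homogeneous degree $\le k$, because $d$ contributes degree $1$ in its leading term of \eqref{distres} and $P_\infty\in\mathscr{P}_{k-1}$ — the stratified Taylor inequality \eqref{stp1} yields $|dP_\infty(p)-P_e(p)|\le c_k|p|^k\sup_{|q|\le b^k|p|,\,|I|=k}|X^I(dP_\infty)(q)-X^I(dP_\infty)(e)|\le C|p|^{k+\alpha}$. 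Combining the two estimates gives \eqref{bdry}. The delicate point is the uniform-in-$j$ passage from $L^\infty$ control of $P_{j+1}-P_j$ on the curved sets $\Om\cap B(\rho^j)$ to coefficient bounds: this rests on the dilation structure of $\G$ together with the flatness \eqref{sm0} of the rescaled domains, and on the bookkeeping ensuring the iterates' coefficients never leave the regime in which Lemma \ref{MCL} applies.
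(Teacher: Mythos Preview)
Your proof is correct and follows essentially the same approach as the paper's: use the \textbf{Algorithm} to produce an initial approximating polynomial, normalize so that the hypothesis of Lemma \ref{MCL} holds at the top scale, iterate along the geometric sequence $\rho^j$ to obtain a limiting $P_\infty\in\mathscr P_{k-1}$ with $|u-dP_\infty|\le C|p|^{k+\alpha}$, and finally take $P_e$ to be the $k$-th stratified Taylor polynomial of $dP_\infty$ at $e$. You supply several details the paper leaves implicit---the passage from $L^\infty$ bounds on $P_{j+1}-P_j$ to coefficient bounds via rescaling and the flatness \eqref{sm0}, the summability ensuring the iterates remain in the regime where Lemma \ref{MCL} applies, and the explicit appeal to \eqref{stp1} at the end---but the overall strategy is identical.
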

\begin{proof}
Using the above \textbf{Algorithm}, we  first find an approximating polynomial $P_1$. Then by multiplying $u$ and $P_1$ with a small constant, we may assume that the hypothesis of the Lemma \ref{MCL} is satisfied for $r=1/2$. Iterating Lemma \ref{MCL} at  scales $\rho/2, \rho^2/2$, and so on, we obtain a limiting  approximating polynomial  $P_0$ such that
\begin{equation}\label{bdry1}
|u(p)- dP_0(p)| \leq C |p|^{k+\alpha},
\end{equation}
for all $p \in \Om \cap B(1/2)$. Since $d P_0 \in \Gamma^{k, \alpha}(\overline{\Om \cap B(1)})$,  by letting $P_e$ to be the $k$-th order Taylor polynomial of $dP_0$ at $e$, we find that \eqref{bdry} follows.

\end{proof}

\begin{remark}\label{dec1}
From the proof of Lemma \ref{btaylor} it follows that since $P_e$ equals the $k$-th order Taylor polynomial of $dP_0$, where $P_0$ is approximating, one has for all $I$ such that $|I| \leq k-2$
\begin{equation}
X^{I} ( \mathscr LP_e) (e) = X^I f(e).
\end{equation}
Then from the stratified Taylor inequality \eqref{stp1} we have
\begin{equation}\label{dec2}
|X^I (\mathscr LP_e) (p) - X^I f(p) | \leq C |p|^{k-2 - |I| +\alpha}
\end{equation}
for all $I$ such that $|I| \leq k-2$.  Furthermore, by left translation it follows that for every $g \in \partial \Om \cap B(1/2)$, there exists a polynomial $P_g$ of degree at most $k$ such that
\begin{equation}\label{dec0}
\begin{cases}
|u(g \circ p) - P_g(p)| \leq C|p|^{k+\alpha},
\\
|X^I (\sum_{i,j=1}^m a_{ij}(g \circ \cdot) X_iX_j P_g)(p) - X^I f( g \circ p) | \leq  C|p|^{k-2 - |I| + \alpha}, \ |I| \leq k-2,\\
\operatorname{sup}_{|I|=k-2} [X^I (\sum_{i,j=1}^m a_{ij}(g \circ \cdot) X_i X_jP_g)(\cdot) - X^I f( g \circ \cdot)]_{\Gamma^{0, \alpha}} \leq C.
\end{cases}
\end{equation}

\end{remark}

This completes the proof of \emph{Step 1}. We now proceed with that of \emph{Step 2}.

\textbf{Step 2:} In our subsequent discussion given any boundary point $ p\in \mathscr S_{1/2}$ we will denote by $P_{p}$ the corresponding Taylor polynomial of $u$ at $p$ whose existence has been established in Lemma \ref{btaylor}. Our main objective in this step is to show that,   given $p_1, p_2 \in \mathscr S_{1/2}$, the following estimate holds for some $K_0>0$ universal: 
\begin{equation}\label{bdcmp}
\operatorname{sup}_{|J|=k} |X^J P_{p_1} (e) - X^J P_{p_2}(e)| \leq  K_0\ d(p_1,p_2)^{\alpha}.
\end{equation}

Let $t= d(p_1,p_2)$. We consider a ``non-tangential" point $p_3 \in \V_1$ at a (pseudo-)distance from $p_1$ comparable to $t$,  i.e., let $p_3$ be such that
\begin{equation}\label{nont}
d(p_3, p_1) \sim t,\ d(p_3, \pa \Om)  \sim t,
\end{equation}
where we have let $d(p,\pa \Om)= \underset{p' \in \pa \Om}{\inf} \ d(p,p')$.
Since $\mathscr S_1$ is a non-characteristic $C^{1,\alpha}$ portion of $\pa \Om$, it is possible to find such a point $p_3$. Arguing as in the proof of Theorem 7.6 in \cite{DGP}, at any scale $t$ one can find a non-tangential pseudo-ball from inside centered at $p_3$. The non-tangentiality of $B(p_3,at)$ means that there exists a universal $a>0$ sufficiently small (which can be seen to depend on the Lipschitz character of $\pa \Om$ near the non-characteristic portion $\mathscr S_1$), such that for some $c_0$ universal one has for all $p \in B(p_3,a t)$ 
\[
d(p,\pa \Om) \geq  c_0 t.
\]
For $s=1,2$, we have that in $B(p_3, at)$, $v^s= u( \cdot) - P_{p_s}( p_s^{-1} \circ \cdot)$, solves
\begin{equation}\label{tu}
\sum_{ij} a_{ij} X_i X_j v^s = \tilde f^s \overset{def} = f - \sum_{i,j} a_{ij} X_i X_j P_{p_s} (p_s^{-1} \circ \cdot).\end{equation}
From the  estimates  in \eqref{dec0} we have for $s=1,2$,
\begin{equation}\label{p1}
\begin{cases}
||v^s||_{L^{\infty}(B(p_3, at))} \leq C t^{k+\alpha},\\
|X^I \tilde f^s (p) | \leq  Ct^{k-2 - |I| + \alpha}, \ |I| \leq k-2,\ \text{for $p \in B(p_3, at)$},\\
\sum_{|I|=k-2} [X^I \tilde f^s]_{\Gamma^{0,\alpha}} \leq C.
\end{cases}
\end{equation}
Using now the interior estimate \eqref{rei1}, \eqref{p1} and the fact that $P_{p_s} \in \mathcal{P}_k$,  we find
\begin{align}\label{rei5}
&\operatorname{sup}_{|J|=k}|X^J v^s (p_3)| =\operatorname{sup}_{|J|=k} |X^J u (p_3) - X^J P_{p_s}(e) | \\ &\leq \frac{C}{t^k} \left(||v^s||_{L^{\infty}(B(p_3, at))}+\sum_{|I| \leq k-2} t^{2+|I|} || X^I \tilde f^s||_{L^{\infty}(B(p_3, at)} + t^{k+\alpha} \sum_{|I|=k-2} [X^I \tilde f^s]_{\Gamma^{0,\alpha}} \right)
\notag
\\
& \leq C t^{\alpha},
\notag
\end{align}
where in the last inequality we have used \eqref{p1}. From \eqref{rei5} and the triangle inequality we thus have for any $J$ such that $|J|=k$, 
\begin{equation}\label{result}
|X^J P_{p_1} (e) - X^J P_{p_2}(e) | \leq |X^J u (p_3) - X^J P_{p_1}(e) | + |X^J u (p_3) - X^J P_{p_2}(e) |  \leq Ct^{\alpha},
\end{equation}
which finishes the proof of the claim \eqref{bdcmp}. 

\textbf{Step 3:} (Proof of Theorem \ref{main})
In this final step it suffices to prove that the $k$-th order horizontal derivative of $u$  is in $\Gamma^{0,\alpha}$ up to the boundary. As a first step we observe that we can find $\ve>0$ sufficiently small such that for any $p\in \V_\ve$ there exists $\bar p \in \mathscr S_{1/2}$ for which
\begin{equation}\label{eqt}
d(p,\bar p) = d(p,\pa \Om).
\end{equation}
To finish the proof of the theorem we will show that for all  $p,p' \in \V_\ve$ and $J$ such that $|J|=k$,  we have   \begin{equation}\label{fop}
|X^J u(p)- X^J u(p')|\leq  C^\star d(p,p')^{\alpha},
\end{equation}
for some  universal constant $C^\star >0$. The sought for conclusion  follows from \eqref{fop} by a standard covering argument.

Given two points  $p, p' \in \V_\ve$ we denote by $\bar p, \bar p'$  the corresponding points in $\mathscr S_{1/2}$ for which \eqref{eqt} holds.
Henceforth, we use the notation $\delta(p) = d(p,\pa \Om)$ for  $p \in \Om$. 
Without loss of generality we assume that $\delta(p)= \min \{\delta(p),\delta(p')\}$. By Lemma \ref{btaylor} and Remark \ref{dec1}  in Step 2, given $\bar p$ as in \eqref{eqt} we know that there exists a $k$-th polynomial $P_{\bar{p}}$ such that for every $q \in \V_1$ one has
\begin{equation}\label{fopb}
\begin{cases}
|u(q) - P_{\bar{p}}(\bar{p}^{(-1)} \circ q)|\leq C_2d(\bar{p},q)^{k+\alpha},\\
|X^I (\sum_{i,j=1}^m a_{ij}(\bar{p} \circ \cdot) X_i X_jP_{\bar{p}})(q) - X^I f( \bar{p} \circ q) | \leq  C|q|^{k-2 - |I| + \alpha}, \ |I| \leq k-2,\\
\operatorname{sup}_{|I|=k-2} [X^I (\sum_{i,j=1}^m a_{ij}(\bar{p} \circ \cdot)X_i X_j P_{\bar{p}})(\cdot) - X^I f( \bar{p} \circ \cdot)]_{\Gamma^{0, \alpha}} \leq C.\end{cases}
\end{equation}
There exists two possibilities:
\begin{itemize}
\item[(i)] $d(p, p') \leq \frac{\delta(p)}{2}$;
\item[(ii)] $d(p, p') > \frac{\delta(p)}{2}$.
\end{itemize}
 \textbf{Case (i)}:  We first observe that we have $B(p, \delta(p)) \subset \Omega$. Therefore, if we let $v(\cdot)=u(\cdot)-P_{\bar p}(\bar{p}^{(-1)} \circ \cdot)$, then similarly to Step 2 we see that $v$ satisfies an equation of the following type  in $B(p, \delta(p)) \subset \Omega$
\[
\sum_{ij} a_{ij} X_i X_j v(\cdot) = \tilde f,
\]
where $\tilde f$ is as in  \eqref{tu} and moreover  estimates as in \eqref{p1} hold, with $t$ replaced by $\delta(p)$. 
In particular, it   follows from \eqref{fopb} that the following estimate holds for some $\tilde C_2>0$
\begin{equation}\label{sup}
||v||_{L^{\infty}(B(p,\delta(p))} \leq \tilde{C}_2 \delta(p)^{k+\alpha}.
\end{equation}
Since $p' \in B(p, \delta(p)/2)$, using the interior regularity estimate \eqref{rei2}  in Corollary \ref{intreg1} and also  \eqref{p1}, we conclude that given $J$ such that $|J|=k$,  there exists some $\tilde C$ depending also on $\tilde{C}_2$ such that the following inequality holds
\begin{align}\label{sup2}
& |X^Jv(p) - X^J v(p')|
\\
& \leq \frac{\tilde C d(p, p')^\alpha}{\delta(p)^{k+\alpha}} \left(||v||_{L^{\infty}(B(p,\delta(p))}+\sum_{|I| \leq k-2} \delta(p)^{2+|I|} ||X^I \tilde f||_{L^{\infty}(B(p, \delta(p))}+\delta(p)^{k+\alpha} \sum_{|I|=k-2} [X^I \tilde f]_{\Gamma^{0, \alpha}(B(p, \delta(p))} \right) 
\notag\\
&\leq  \frac{\tilde C}{\delta(p)^{k+\alpha}} \delta(p)^{k+\alpha} d(p, p')^{\alpha}\ \text{(using \eqref{sup} and \eqref{p1})}\notag\\ & \leq   \tilde{C_1} d(p, p')^{\alpha}.
\notag
\end{align}
Since $P_{\bar p} \in \mathcal{P}_k$,  we have that $X^J P_{\bar p}$ is constant for any $J$ such that $|J|=k$. From the definition of $v$ it thus follows that  
\[
|X^J u(p) - X^Ju(p')| = |X^J v(p) - X^J v(p')|.
\]
From this observation and \eqref{sup2} we conclude that \eqref{fop} holds. 

\noindent \textbf{Case (ii)}: The hypothesis $d(p, p') > \frac{\delta(p)}{2}$ and \eqref{eqt} imply 
\[
d(p,\bar{p}) = d(p,\pa\Om) = \delta(p) < 2 d(p, p').
\]
Combining this observation with \eqref{pseudo}, which in particular implies a pseudo-triangle inequality,  gives  
\begin{equation}\label{n101}
d(p', \bar{p}) \leq C_0(d(p',p) + d(p,\bar{p})) \leq  C_0 (d(p',p) + 2 d(p',p)) = 3 C_0 d(p, p').
\end{equation}
Since $d(p',\bar{p}) \geq d(p',\pa \Om) = \delta(p')$, we immediately find from \eqref{n101} 
\begin{equation}\label{n103}
\delta(p') \leq 3C_0 d(p,p').
\end{equation}
From  \eqref{n101}, \eqref{n103}, and an application of the pseudo-triangle inequality, we finally have
\begin{equation}\label{dist}
 d(\bar{p}, \bar{p}') \leq C_0 (d(\bar p, p') + d(p', \bar{p}')) = C_0 (d(\bar p,p') + \delta(p')) \leq 6 C_0^2 d(p, p').
\end{equation}
Let now $a$ be the universal constant in the existence of a non-tangential (pseudo)-ball in Step 2. From the estimate \eqref{rei5} in  Step 2 we infer that the following holds for $J$ such that $|J|=k$,
\begin{equation}\label{imo2}
|X^Ju(p) - X^J P_{\bar{p}}(e)|  \leq C \delta(p)^{\alpha} \leq C d(p, p')^{\alpha}.
\end{equation}
Note that, since we are in Case (ii), in the last inequality in \eqref{imo2} we have used $\delta(p) \leq 2 d(p, p')$. 
Arguing similarly to \eqref{imo2}, we find 
\begin{equation}\label{imo1}
|X^J  u(p') - X^J P_{\bar{p}'}(e)| \leq C \delta(p')^{\alpha} \leq C d(p, p')^{\alpha},
\end{equation}
where in the last inequality in \eqref{imo1} we have this time used \eqref{n103}. 
From \eqref{bdcmp} and  \eqref{dist} we now have
\begin{equation}\label{finali}
|X^J P_{\bar p}(e) - X^J P_{\bar{p}'}(e)| \leq K_0 d(\bar p, \bar{p}')^{\alpha} \le C d(p, p')^{\alpha}.
\end{equation}
Applying the triangle inequality with the estimates \eqref{imo2}, \eqref{imo1} and \eqref{finali}, we finally conclude that \eqref{fop} holds.  This completes the proof of the theorem.

\end{document}